\documentclass[11pt,reqno]{amsart}
\usepackage{enumerate, latexsym, amsmath, amsfonts, amssymb, amsthm, color}
\usepackage{mathrsfs}
\usepackage{booktabs,array}
\usepackage{enumitem}
\usepackage[hidelinks]{hyperref}
\def\pmod #1{\ ({\rm{mod}}\ #1)}
\def\Z{\mathbb Z}
\def\N{\mathbb N}

\def\bg{\bigg}
\def\({\bg(}
\def\){\bg)}
\def\t{\text}

\def\cs{\ldots}

\def\ls{\leqslant}
\def\gs{\geqslant}
\def\se {\subseteq}

\newcommand{\Acal}{\mathcal A}
\newcommand{\Cscr}{\mathscr C}
\newcommand{\Sq}{\mathop{\rm Sq}}

\newcommand{\abs}[1]{\lvert#1\rvert}

\theoremstyle{plain}
\newtheorem{theorem}{Theorem}[section]

\newtheorem{lemma}{Lemma}[section]
\newtheorem{corollary}{Corollary}[section]
\newtheorem{proposition}{Proposition}[section]

\theoremstyle{definition}

\theoremstyle{remark}
\newtheorem{remark}{Remark}

 \vspace{4mm}

\begin{document}

\hbox{Preprint, {\tt arXiv:2609.01594}}
\medskip

\title
[{Ten unknowns for Hilbert's tenth problem over the integers}]
{Ten unknowns for Hilbert's tenth problem over the integers}

\author
[Zhi-Wei Sun] {Zhi-Wei Sun}

\address{School of Mathematics, Nanjing
University, Nanjing 210093, People's Republic of China}
\email{zwsun@nju.edu.cn}

\keywords{Hilbert's tenth problem, Diophantine equations, integral solutions, undecidability.
\newline \indent 2020 {\it Mathematics Subject Classification}. Primary 11U05, 11D99; Secondary 03D25, 03D35, 11B39.
\newline \indent Supported by the Natural Science Foundation of China (grant no. 12371004).}

\begin{abstract}  Hilbert's Tenth Problem over the integers was solved negatively by Y. Matiyasevich in 1970. In this paper, we prove that there is no algorithm to determine for any polynomial equation $P(z_1,\ldots,z_{10})=0$ (with integer coefficients and ten unknowns) whether it has integer solutions.  This improves the previous 11 unknowns theorem.
\end{abstract}
\maketitle

\section{Introduction}
\setcounter{equation}{0}

Hilbert's Tenth Problem (HTP) asks for an effective algorithm to test whether
an arbitrary polynomial equation
$$P(z_1,\cs,z_n)=0$$
(with integer coefficients) has solutions over the ring $\Z$ of integers.
This is the tenth problem of his famous 23 mathematical problems posed at the 1900 ICM.

Let $\N=\{0,1,2,\ldots\}$. A subset ${\mathcal A}$ of $\N$ is called a {\it Diophantine set} if
there is a polynomial $P(x_0,x_1,\ldots,x_n)$ with integer coefficients
such that for any $a\in\N$ we have
$$a\in {\mathcal A}\iff\exists x_1\gs0\ldots\exists x_n\gs0[P(a_1,\ldots,a_m,x_1,\ldots,x_n)=0].$$
(Throughout this paper, variables always range over $\Z$.) 

Based on the important work of M. Davis, H. Putnam and J. Robinson \cite{DPR} on exponential Diophantine equations, in 1970 Y. Matiyasevich \cite{M70} solved HTP negatively by proving Davis' hypothesis which asserts that Diophantine sets coincides with r.e. (recursively enumerable) sets arising from the theory of computability. Note that some r.e. sets are not recursive (cf. \cite[pp.\,140-141]{C80}).

For convenience, for a set $S\se\Z$ and a fixed positive integer $n$, we let $\exists^n$ over $S$ denote the set
$$\{\exists x_1\in S\ldots\exists x_n\in S[P(x_1,\ldots,x_n)=0]:\ P(x_1,\ldots,x_n)\in \Z[x_1,\ldots,x_n]\}.$$
It is easy to see that $\exists$ over $\N$ and $\exists$ over $\Z$ are decidable. In fact, 
 if $a_0,a_1,\ldots,a_n$ and $z$ are integers
with $a_0z\not=0$ and $\sum_{i=0}^na_iz^{n-i}=0$, then
$$|z|^n\ls|a_0z^n|\ls\sum_{i=1}^n|a_i|\cdot|z|^{n-i}\ls \sum_{i=1}^n|a_i|\cdot|z|^{n-1}\ \t{and hence}\ |z|\ls \sum_{i=1}^n|a_i|.$$

Matiyasevich's 9 unknowns theorem announced in \cite{M77} asserts that $\exists^9$ over $\N$
is undecidable. For a complete proof of this, the reader may consult Jones \cite{J82}.

By the Gauss-Legendre theorem on sums of three squares (cf. \cite[pp.\, 17-23]{N96}), the number $4m+1$ with $m\in\N$ can be written as the sum of
two even squares and an odd square. It follows that for any integer $m$ we have
\begin{equation}\label{1.1}m\gs0\iff \exists x\exists y\exists z[m=x^2+y^2+z^2+z].\end{equation}
Therefore the undecidability of $\exists^n$ over $\N$ implies
the undecidability of $\exists^{3n}$ over $\Z$.
In 1992 Sun \cite{S92b} showed for any $n\in\Z^+$ that if $\exists^n$ over $\N$ is undecidable then so is $\exists^{2n+2}$ over $\Z$.

In 1985 S. P. Tung \cite{T85} asked for small values of $n$ such that $\exists^n$ over $\Z$
is undecidable.  In 1992, the author announced in \cite{Sun92, S92b} that $\exists^{11}$ over $\Z$ is undecidable, the whole sophisticated proof of which was published in \cite{Sun21}.
For applications of the 11 unknowns theorem, see \cite{RGK,MPR,Sun26}.

For the extended HTP over a ring $R$ containing $\Z$,
the usual strategy to obtain its undecidability is as follows: Prove that $\Z$ is Diophantine over $R$ and then use the result that HTP over $\Z$ is undecidable.
Thus, to find a small positive integer $m$ with $\exists^m$ over $R$ undecidable, depends heavily on
the undecidability of $\exists^n$ over $\Z$ ({\it not} $\N$) with $n$ as small as possible. In this sense, to find a small number
$n$ with $\exists^{n}$ over $\Z$ undecidable is quite important and very useful.

Now we state our main result. 

\begin{theorem}\label{Th1.1} Let $\mathcal A\se\N$ be any r.e. set.
Then there is a polynomial $Q_{\mathcal A}(z_0,z_1,\ldots,z_{10})$
with integer coefficients such that for any $a\in\N$ we have
\begin{equation}\label{1.2}a\in\mathcal A\iff \exists z_1\ldots\exists z_{10}\,[Q_{\mathcal A}(a,z_1,\ldots,z_{10})=0].\end{equation}
\end{theorem}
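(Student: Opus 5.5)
We follow the strategy of the author's earlier 11 unknowns theorem \cite{Sun21}, saving one more unknown. The starting point is Matiyasevich's relation-combining machinery in the form used by Jones \cite{J82}. It represents an arbitrary r.e. set $\mathcal A$ through a bounded condition involving exponential-type quantities. These are then encoded Diophantinely via Lucas sequences (solutions of Pell-type equations $x^2-(A^2-4)xy+\ldots$, or equivalently $u_n(A,1)$), together with a ``supplementary'' divisibility condition. Concretely, we would first fix a representation
$$a\in\mathcal A\iff \exists\, \text{(few) } x_i\in\N\,[\text{a finite system of conditions}],$$
where the conditions include a Pell-type equation $D y^2+1=\square$, a divisibility relation tying $a$ to binomial-type coefficients modulo a large prime-like modulus, and some inequalities. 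Then $a\in\mathcal A$ iff a single polynomial combination of these conditions has a solution.

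The key steps, in order, are as follows. \textbf{(i)} Use the Matiyasevich--Robinson relation-combining lemma: a condition ``$\exists$ integers making $R_1=\square,\ldots,R_k=\square$'' is replaced by a single polynomial equation in a bounded number of extra unknowns. \textbf{(ii)} Replace each nonnegativity requirement $m\gs0$ by a cheaper device than \eqref{1.1}, which costs 3 unknowns. Nonnegativity should instead be forced structurally, e.g.\ by working with Pell equation solutions whose sign is irrelevant (the pair $(x,y)$ and $(-x,-y)$ give the same information), or by encoding $m\gs0$ as $4m+1$ or $(4m+2)n^2+1$ being a square times something already present. \textbf{(iii)} Exploit over $\Z$ the identity that a quantity is a sum of two squares, or that $x^2-dy^2=1$ forces $x\ne0$ for free, to absorb sign conditions. \textbf{(iv)} Count unknowns carefully and merge every auxiliary parameter that only needs to exist into the Pell parameters. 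The target is that the final polynomial $Q_{\mathcal A}(a,z_1,\ldots,z_{10})$ uses exactly ten integer unknowns. Finally, check both directions. For soundness, every integer solution yields a genuine witness. This requires showing that negative values of unknowns either reproduce valid witnesses under $z\mapsto -z$ symmetry or contradict some inequality encoded as a square condition. For completeness, a witness lifts to integers.

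The main obstacle is step (ii)/(iv): eliminating the eleventh unknown. In \cite{Sun21} one unknown was spent on the sign/size condition of a Lucas-sequence index, guaranteeing that the solution of the Pell equation corresponds to a nonnegative index and to an actual large modulus. We would try to fold this condition into the Pell equation itself. The plan is to choose the discriminant as $(A^2-4)C^2$ with $A$ expressed polynomially in the other unknowns. Any integer solution $(x,y)$ then equals $\pm(v_n/2,\,u_n)$ for some $n\in\Z$, and because $u_{-n}=-u_n$, replacing $y$ by $y^2$-dependent expressions (or demanding $C\mid y$ with $y$ appearing only through even-degree terms) makes the sign of $n$ irrelevant. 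If this symmetry argument works, the saved unknown gives ten; the delicate part will be verifying that the congruence conditions modulo the Lucas modulus still pin down the intended values when $n$ may be negative. That verification requires a careful case analysis using standard congruences $u_{n+2m}\equiv -u_n\pmod{u_{m+1}-u_{m-1}}$-type facts.
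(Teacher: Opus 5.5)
\textbf{There is a genuine gap: your proposal is a strategy sketch, not a proof.} It never writes down the polynomial and never fixes the list of ten unknowns. Its one concrete idea for removing the eleventh unknown is explicitly conditional (``if this symmetry argument works'').

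\textbf{The sign idea targets a cost that is not present.} The sign of a Lucas index is already free over $\Z$. Lemma~\ref{lem:pell-lucas} characterizes the values $u_n(A)$ for all $n\in\Z$ by a single square condition. In the reverse argument (Theorem~\ref{thm:modified-reverse}), the ambiguity $s=\pm B$ is removed by $C\equiv u_s(2)=s\pmod{A-2}$, and that congruence is built into $C=B+(A-2)h$ at no cost in unknowns. Likewise, your steps (ii)--(iii) restate devices already in use: Lemma~\ref{lem:positivity} for positivity, and Lemma~\ref{lem:relation-combining} for combining squares with one divisibility. By themselves they remove no unknown. This is not where the paper saves an unknown.

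\textbf{What actually has to be certified.} In the paper, $f,g,h,k,l,w$ come from the source coding and the binomial-coefficient bridge of \cite{Sun21}, and $m$ comes from relation-combining. What remains is to certify three things with only the three unknowns $x,y,z$:
\begin{itemize}
\item $C=u_B(A)$, via two Lucas witnesses, the first of which must be nonzero;
\item $O_0>0$, via a witness $z$ that must be nonzero;
\item $S\mid T$.
\end{itemize}
The paper does this with three ideas absent from your plan.

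(1) \emph{Shared witness.} The positivity witness is shared with the first Lucas witness via $\xi=zx$. This is legitimate because Theorem~\ref{thm:full-interpolation} produces $\xi$ divisible by any prescribed $N$; one takes $N=z$.

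(2) \emph{Full residue-class interpolation.} For such $\xi$, the second witness $y$ can be placed in every residue class modulo $\xi$ while $DFI^*$ remains a square. The crux is $\gcd(d,\xi)=1$ for $d=\lambda/\gcd(\lambda,\Omega)$. This is proved from the explicit period $\lambda=4nr$ modulo $F$ together with the valuation bounds $\nu_p(u_n(A))\geq\nu_p(n)$.

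(3) \emph{Merging divisibility with nonvanishing.} The conditions $S\mid T$ and $\xi\neq0$ are merged into the single divisibility $\xi S^6\mid\mathcal R(S,T,y)$. This uses $\Pi(X)=(X^2-13)(X^2-17)(X^2-221)$, which has a root modulo every nonzero integer but no rational root. The freedom from (2) is exactly what lets $y$ serve simultaneously as a Lucas witness and as the witness for this divisibility.

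Then $H_4$ combines the four squares $b$, $(3O_0-4)z^2+1$, $DFI^*$, $R_2$ with this one divisibility, at the cost of the single unknown $m$. The degenerate case $\xi S^6=0$ is excluded because $H_4$ then reduces to $\mathcal R(S,T,y)^{16}\neq0$. Without these ingredients, or some substitute that genuinely makes one unknown do double duty, your outline gives no mechanism for reaching ten unknowns.
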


Since there are nonrecursive r.e. sets, Theorem \ref{Th1.1} immediately yields the following corollary.

\begin{corollary}\label{Cor1.1} $\exists^{10}$ over $\Z$ is undecidable, i.e., there is no algorithm to test whether the equation $P(z_1,\ldots,z_{10})=0$ has integer solutions,
where $P$ is an arbitrary polynomial of $10$ variables with integer coefficients.
\end{corollary}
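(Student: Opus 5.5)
We follow the strategy behind the 11 unknowns theorem of \cite{Sun21}, tightened to save one unknown. The starting point is the Matiyasevich--Robinson relation-combining machinery together with the exponential Diophantine representation of r.e.\ sets. Write $\mathcal A$ in the form
$$a\in\mathcal A\iff\exists x_1\ge0\ldots\exists x_\nu\ge0\,[P(a,x_1,\ldots,x_\nu)=0].$$
By the standard Jones--Matiyasevich coding with a single exponential relation (a binomial-coefficient or "masking" relation), this is equivalent to the existence of a few large natural numbers satisfying a condition of the shape
$$\exists\, b,c,d\ge0\ \big[\,\text{polynomial divisibility conditions}\ \wedge\ \text{one condition } c=\text{a Lucas-sequence value }u_{b}(d)\,\big].$$

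\emph{Key steps, in order.}
\begin{enumerate}
\item Reduce membership in $\mathcal A$ to a statement with only three or four "essential'' unknowns $b,c,d,\ldots$ together with one exponential relation. The relation is expressed via Lucas sequences $u_n(A)$, i.e.\ solutions of the Pell-type equation $x^2-Axy+y^2=1$.
\item Express the Lucas relation Diophantinely over $\Z$ with as few unknowns as possible. Use the characterization that $x^2-Axy+y^2=1$ forces $(x,y)$ to be consecutive Lucas terms, and combine several such Pell equations into one. The device is to make them share a parameter and to use congruences modulo a large auxiliary number, as in \cite{Sun21}.
\item Convert every remaining condition of the form "$t\ge0$'', "$t\ne0$'' or "$m\mid n$'' into a single equation. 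Use \eqref{1.1} sparingly; better, use the trick that a number of the form $4m+1$ or $m^2(\ldots)+1$ is a sum of squares. Nonnegativity conditions can be absorbed into one representation: for $m\ge0$, write $m$ as $x^2+y^2+z^2+z$ once, after aggregating all inequalities into a single product-like quantity.
\item Finally, merge all equations into one via the sum-of-squares trick $\sum P_i^2=0$, and count the unknowns.
\end{enumerate}

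\emph{Counting and the saving.} In \cite{Sun21} the count was $11$. To get $10$, I plan to eliminate one unknown in the Pell step. The idea is to let one variable simultaneously serve as a Lucas term and as the witness in a "$\ge0$'' representation. Concretely, I replace a separate square-sum nonnegativity witness by the observation that the solutions of $x^2-Axy+y^2=1$ with $A\ge2$ already come with controlled signs. A divisibility of the form $2x\mid\ldots$ can then be encoded via a quadratic form whose values are automatically nonnegative, saving the three-square witness for one variable.

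\emph{Main obstacle.} The delicate step is the Diophantine representation of the exponential relation over $\Z$ with one fewer unknown. All the lemmas on Lucas sequences modulo auxiliary moduli (periodicity, $u_n(A)\equiv n\pmod{A-2}$, and $u_n(A)\equiv u_n(B)\pmod{A-B}$) must be rearranged so that one variable carries two roles without introducing spurious integer solutions with negative entries. I would first try to prove a lemma of the following form: for suitable integers, "$c=u_b(d)$ with side conditions'' holds iff there are $x,y,w$ with a single Pell equation and one divisibility. I would then verify by careful sign analysis that negative integer solutions cannot occur, which is the step most likely to need extra ideas.
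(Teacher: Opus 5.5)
\textbf{There is a genuine gap.} Given Theorem \ref{Th1.1}, the corollary is immediate: for a nonrecursive r.e.\ set $\mathcal A$, a decision procedure for $\exists^{10}$ over $\Z$, applied to $Q_{\mathcal A}(a,z_1,\ldots,z_{10})=0$, would decide membership in $\mathcal A$. So all the content lies in producing a 10-unknown representation, and your proposal does not produce one.

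The lemma that is supposed to save an unknown is explicitly left unproved (you call it ``the step most likely to need extra ideas''). The mechanism you suggest for it is not a concrete device and does not address the real bottleneck. That mechanism is sign control for solutions of $x^2-Axy+y^2=1$, together with a quadratic form whose values are automatically nonnegative.

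Your bookkeeping also cannot reach ten. The three-square representation \eqref{1.1} costs three unknowns. Merging by $\sum P_i^2=0$ leaves a separate root unknown for every condition of the form $t\in\Sq(\Z)$. In the construction of \cite{Sun21} that you build on, the source coding and bridge already use $f,g,h,k,l,w$, and the Lucas certificate needs two witnesses. That leaves exactly one unknown for positivity and nonvanishing, and one for combining everything else.

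The paper's route works within that budget as follows.
\begin{enumerate}
\item Positivity of $O_0$ is expressed by Lemma \ref{lem:positivity} with a single witness $z\neq0$.
\item The four square conditions and one divisibility are packed into $H_4$ with the single unknown $m$ (Lemma \ref{lem:relation-combining}).
\item A modified Lucas certificate (Theorem \ref{thm:modified-reverse}) is used, whose reverse direction needs only $\xi\neq0$.
\item Full residue-class interpolation (Theorem \ref{thm:full-interpolation}) lets $\xi$ be any multiple of a prescribed $N$, and lets the second witness $y$ lie in any residue class modulo $\xi$.
\item The merger Lemma \ref{lem:merger} uses $\Pi(X)=(X^2-13)(X^2-17)(X^2-221)$, which has roots modulo every integer but none in $\Z$. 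It turns the pair of conditions $S\mid T$ and $\xi\neq0$ into the single divisibility $\xi S^6\mid\mathcal R(S,T,y)$. Since $\mathcal R$ never vanishes, it also rules out the degenerate case $\Sigma=0$ via \eqref{eq:H-zero}.
\end{enumerate}
Your proposal lacks the last three ingredients.

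Only with them can one set $\xi=zx$, taking $N=z$, so that the positivity witness is shared with the first Lucas witness. The same $y$ then satisfies both $DFI^*\in\Sq(\Z)$ and $\xi\mid\Pi(q+y)$. Your instinct to make one variable play two roles is in this spirit. Without interpolation and the merger, however, nonvanishing and divisibility cannot be enforced without an extra unknown, so your count does not drop from eleven to ten.
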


Throughout this paper, we adopt the notations
$$\Sq(\Z)=\{r^2:r\in\Z\}$$
and
$$ p\uparrow:=\{p^n:\ n\in\N\}\ \quad \t{for}\ p\in\Z^+.$$
For any prime $p$, the $p$-adic order (or valuation) of a nonzero integer $m$ is denoted by $\nu_p(m)$.

The paper is organized as follows.  Section \ref{sec:prelim} records the Lucas,
valuation, source-coding and relation-combining inputs.  In
Section \ref{sec:modified} we prove the reverse implication for the modified Lucas
certificate.  Section \ref{sec:interpolation} proves full residue-class interpolation
for its second witness.  Section \ref{sec:merger} gives the divisibility--nonvanishing
merger.  Section \ref{sec:bridge} inserts these ingredients into the 
binomial-coefficient bridge.  Section \ref{sec:final} defines the final polynomial,
proves Theorem \ref{Th1.1}.

\section{Preliminaries and external inputs}\label{sec:prelim}
\setcounter{equation}{0}

Let $A$ be an integer. We define the Lucas sequences $u_n=u_n(A)$ $(n\in\Z)$
and $v_n=v_n(A,1)$ $(n\in\Z$) by letting
\begin{equation}\label{1.18}u_0=0,\ u_1=1,\ \t{and}\ u_{n-1}+u_{n+1}=Au_n\ \t{for all}\ n\in\Z,\end{equation}
and
\begin{equation}\label{1.19}v_0=2,\ v_1=A,\ \t{and}\ v_{n-1}+v_{n+1}=Av_n\ \t{for all}\ n\in\Z.\end{equation}
It is easy to see that
\begin{equation}\label{1.20}u_{-n}(A)=-u_n(A)=(-1)^nu_n(-A)
\ \t{and}\ v_{-n}(A)=v_n(A)=(-1)^nv_n(-A)
\end{equation} for all $n\in\Z$.
It is also well known that
\begin{equation}\label{uv} u_{2n}(A)=u_n(A)v_n(A)\ \ \t{and}\ \ v_n(A)^2-(A^2-4)u_n(A)^2=4
\end{equation}
(cf. \cite[Theorems 1.2.3 and 1.2.7]{S24}). For the matrix 
\[
 M_A=\begin{pmatrix}A&-1\\1&0\end{pmatrix},
\]
by induction we have
\begin{equation}\label{eq:matrix}
 M_A^n=
 \begin{pmatrix}
 u_{n+1}(A)&-u_n(A)\\
 u_n(A)&-u_{n-1}(A)
 \end{pmatrix}
\end{equation}
for any positive integer $n$.
For the relation $C=u_B(A)$ with $A,B,C\in\Z$, the author studied its Diophantine representations over $\Z$ in the published paper \cite{Sun92}, which laid the foundation of the proof of the 11 unknowns theorem in \cite{Sun21}.

\begin{lemma}\label{lem:pell-lucas}
For $A,X\in\Z$,
\[
 (A^2-4)X^2+4\in\Sq(\Z)
 \quad\Longleftrightarrow\quad
 X=u_n(A)\text{ for some }n\in\Z.
\]
\end{lemma}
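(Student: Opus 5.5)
The forward implication is immediate from \eqref{uv}: if $X=u_n(A)$ then $(A^2-4)X^2+4=v_n(A)^2\in\Sq(\Z)$. So the real content is the reverse implication, and I would split according to the sign of $D:=A^2-4$.

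\emph{Small $|A|$.} If $A=\pm2$ then $D=0$, and the left side is always the square $4$. So I must check that every integer is some $u_n(\pm2)$. The recurrence gives $u_n(2)=n$, and by \eqref{1.20} $u_n(-2)=(-1)^{n-1}n$, so both sequences hit every integer. If $A\in\{0,\pm1\}$ then $D\in\{-4,-3\}$. In that case $DX^2+4\ge0$ forces $|X|\le1$, and the values $0,\pm1$ all occur among $u_0,u_1,u_{-1}$, as one checks from the recurrence.

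\emph{Main case $|A|\ge3$.} Since $u_n(-A)=\pm u_n(A)$ and $u_{-n}=-u_n$, it suffices to take $A\ge3$ and $X>0$. Then $(A-1)^2<D<A^2$, so $D$ is not a square. Suppose $Y^2-DX^2=4$ with $Y,X>0$; this forces $Y\equiv AX\pmod 2$. I would run a descent that multiplies $(Y+X\sqrt D)/2$ by $\varepsilon^{-1}=(A-\sqrt D)/2$, which has norm $1$. Concretely, define
\[
Y'=\frac{AY-DX}{2},\qquad X'=\frac{AX-Y}{2}.
\]
These are integers by the parity condition, and they satisfy $Y'^2-DX'^2=4$.

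The key inequalities are $0\le X'<X$ and $Y'>0$; I expect verifying them to be the main obstacle.
\begin{itemize}
\item $X'<X$ is equivalent to $Y>(A-2)X$. This follows from $Y^2=DX^2+4>(A-2)^2X^2$, since $D=(A-2)(A+2)\ge(A-2)^2$.
\item $X'\ge0$ is equivalent to $Y\le AX$, i.e.\ $DX^2+4\le A^2X^2$, i.e.\ $4\le4X^2$, which holds since $X\ge1$.
\item $Y'>0$ is equivalent to $AY>DX$, which holds because $A^2Y^2=A^2DX^2+4A^2>D^2X^2$.
\end{itemize}

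Iterating the step strictly decreases $X$ while keeping $X\ge0$ and $Y>0$, so the process terminates at some pair $(Y,0)$, which must be $(2,0)$. Reversing the steps, $(Y,X)$ is obtained from $(2,0)$ by applying the map
\[
(y,x)\mapsto\left(\frac{Ay+Dx}{2},\ \frac{y+Ax}{2}\right)
\]
$n$ times, i.e.\ multiplication by $\varepsilon$.

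It remains to identify this iterate with $(v_n,u_n)$, by induction on $n$. The base case is $(v_0,u_0)=(2,0)$. The inductive step needs $u_{n+1}=(v_n+Au_n)/2$ and $v_{n+1}=(Av_n+Du_n)/2$. I would derive these identities from \eqref{eq:matrix}, or simply check that both sides satisfy the same linear recurrence with the same initial values. This gives $X=u_n(A)$, and with the sign reductions above, every integer $X$ with $DX^2+4\in\Sq(\Z)$ equals $u_n(A)$ for some $n\in\Z$.
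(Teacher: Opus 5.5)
Your proof is correct. The paper gives no argument of its own here: it cites \cite[Lemma~4.3]{Sun21}, remarking only that this extends \cite[Lemma~9]{Sun92} to all integral parameters.

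Your proposal supplies that content directly. For $|A|\ge3$ you run the standard Pell descent, multiplying by the norm-one element $\varepsilon^{-1}=(A-\sqrt{D})/2$. Your three inequalities are exactly what the descent needs, and each is verified correctly:
\begin{itemize}
\item $X'<X$ follows from $D\ge(A-2)^2$;
\item $X'\ge0$ reduces to $4\le 4X^2$;
\item $Y'>0$ follows from $A^2>D>0$.
\end{itemize}
The recurrence check identifying $\varepsilon^n\cdot(2,0)$ with $(v_n,u_n)$ also goes through: both sides of $u_{n+1}=(v_n+Au_n)/2$ and of $v_{n+1}=(Av_n+Du_n)/2$ agree at $n=0,1$.

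Your explicit treatment of the degenerate parameters is also correct. When $A=\pm2$ every integer is some $u_n(\pm2)$. When $A\in\{0,\pm1\}$ the square condition forces $|X|\le1$. Together with the sign reductions $A\mapsto-A$ and $X\mapsto-X$, this covers the full range of integral $A$ that the remark emphasizes.

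The citation keeps the paper short and defers to a published source. Your route buys a self-contained, elementary verification that needs no appeal to the general theory of Pell equations or fundamental units, since the descent establishes the minimality of $(A+\sqrt{D})/2$ on the fly. One small point: your observation that $D$ is not a square for $A\ge3$ is never used and can be dropped.
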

\begin{remark}
This is \cite[Lemma~4.3]{Sun21}, which extends
\cite[Lemma~9]{Sun92} to all integral parameters.
\end{remark}

\begin{lemma}\label{lem:index-separation}
Let $A,n,s,t,k\in\Z$ satisfy $\abs A>2$, $n>3$, and
\[
 v_n(\abs A)=2k>0.
\]
If
\[
 u_s(A)\equiv u_t(A)\pmod{k},
\]
then
\[
 s\equiv t\pmod{2n}
 \quad\text{or}\quad
 s\equiv-t\pmod{2n}.
\]
\end{lemma}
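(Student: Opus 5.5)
The plan is to reduce to $A>2$, show that $u_m(A)\bmod k$ is determined by $m\bmod 4n$ through a reflection and an antiperiodicity, and then check that the residues produced in one period are pairwise incongruent mod $k$ except for the coincidences those symmetries force.

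\emph{Reduction to $A>2$.} From (\ref{1.20}) we get $u_m(A)=(-1)^{m+1}u_m(-A)$. So for $A<-2$ we have $u_s(A)=\pm u_s(\abs A)$ and $u_t(A)=\pm u_t(\abs A)$. Since $-u_t=u_{-t}$, the hypothesis becomes $u_s(\abs A)\equiv u_{t'}(\abs A)\pmod k$ or $u_{s}(\abs A)\equiv u_{-t'}(\abs A)\pmod k$ with $t'=\pm t$. The conclusion $s\equiv\pm t\pmod{2n}$ is invariant under $t\mapsto -t$, so it suffices to treat $A=\abs A\ge 3$. Write $u_m=u_m(A)$ and $v_n=v_n(A)=2k$.

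\emph{Symmetries modulo $k$.} The standard addition law $u_{m+n}+u_{m-n}=u_mv_n$ follows from (\ref{eq:matrix}) or from the Binet form. Since $k\mid v_n$, it gives $u_{m+n}\equiv -u_{m-n}\pmod k$ for every $m$, and two consequences:
\begin{itemize}
\item replacing $m$ by $m+n$ gives the antiperiodicity $u_{m+2n}\equiv -u_m\pmod k$;
\item taking $m=j-n$ and using $u_{-r}=-u_r$ gives the reflection $u_{n+j}\equiv u_{n-j}\pmod k$.
\end{itemize}
Thus, modulo $k$, $u_m$ depends only on $m\bmod 4n$:
\begin{itemize}
\item for $m\equiv r\pmod{4n}$ with $0\le r\le 2n$, we have $u_m\equiv u_{\min(r,2n-r)}$;
\item for $2n\le r<4n$, we have $u_m\equiv -u_{\min(r-2n,4n-r)}$.
\end{itemize}
In particular $u_m\equiv \epsilon\, u_j\pmod k$ with $\epsilon\in\{\pm1\}$, $0\le j\le n$ and $m\equiv \pm j\pmod{2n}$. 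So if $u_s\equiv \epsilon u_i$ and $u_t\equiv \delta u_j$, it suffices to show that $\epsilon u_i\equiv \delta u_j\pmod k$ forces $i=j$. Then $s\equiv\pm i\equiv\pm t\pmod{2n}$.

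\emph{Separation.} We must show that $u_i-u_j$ (for $i\ne j$) and $u_i+u_j$ (for $(i,j)\neq(0,0)$) are never divisible by $k$ when $0\le i,j\le n$. Both quantities have absolute value in $(0,2u_n]$, so it suffices to prove $2u_n<2k=v_n$ and to rule out $u_i+u_j=k$. We have $v_n=u_{n+1}-u_{n-1}=Au_n-2u_{n-1}$, and for $A\ge3$ the sequence satisfies $u_n>2u_{n-1}$ for $n\ge1$. Hence $v_n-2u_n=(A-2)u_n-2u_{n-1}>0$. The bound $2u_n<v_n=2k$ also handles $\abs{u_i-u_j}\le u_n<k$ directly. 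The remaining possibility is the exact equality $2(u_i+u_j)=v_n$, which is the main obstacle.

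\emph{The obstacle: ruling out $2(u_i+u_j)=v_n$.} Write this as $2(u_i+u_j)=Au_n-2u_{n-1}$.
\begin{itemize}
\item If $i=j=n$, it becomes $(A-4)u_n=2u_{n-1}$. This is impossible for $A=4$ since $u_{n-1}>0$. It is impossible for $A\ge5$ since $u_n>2u_{n-1}$, and for $A=3$ since the left side is negative.
\item If $\max(i,j)=n>\min(i,j)$, it becomes $(A-2)u_n=2u_{n-1}+2u_{\min}$. The right side is at most $4u_{n-1}$, so we need $(A-2)u_n\le 4u_{n-1}$. This fails for $A\ge4$ because $u_n>2u_{n-1}$. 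For $A=3$ it requires $u_n\le 4u_{n-1}$, so this subcase needs care: there I would use $u_n=3u_{n-1}-u_{n-2}$ to rewrite the equation as $u_{n-1}-u_{n-2}=2u_{\min}$ and compare sizes with $\min\le n-1$.
\item If $i,j\le n-1$, we get $2(u_i+u_j)\le 4u_{n-1}<v_n$ once $(A-2)u_n>2u_{n-1}$ with room to spare. That margin holds for $A\ge 4$; for $A=3$ it again uses the recursion.
\end{itemize}
The hypothesis $n>3$ is where I expect the Fibonacci-type case $A=3$ (with $u_m=F_{2m}$) to need explicit checking, since small $n$ would allow accidental equalities. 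I would finish by a parity or size argument on the identity $u_{n-1}-u_{n-2}=2u_{\min}$ for $A=3$, $n\ge4$.
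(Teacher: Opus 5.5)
Your proof is correct, but it does not follow the paper: the paper gives no argument for this lemma, only the remark that it is the integral form of \cite[Lemma~10]{Sun92}. You instead prove it from scratch in the classical step-down style. The addition formula $u_{m+n}+u_{m-n}=u_mv_n$ gives, modulo $k$, the antiperiodicity $u_{m+2n}\equiv-u_m$ and the reflection $u_{n+j}\equiv u_{n-j}$, which reduce everything to $\pm u_j$ with $0\le j\le n$. The bounds $u_n<k$ and $4u_{n-1}<v_n$ then leave only $2(u_n+u_m)=v_n$ with $m<n$. The second bound is equivalent to $Au_n>6u_{n-1}$, which holds for all $A\ge3$, so the case $i,j\le n-1$ needs no extra work when $A=3$.

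The subcase you left open closes by size. For $A=3$ the equation is $u_{n-1}-u_{n-2}=2u_m$:
\begin{itemize}
\item $m=n-1$ would force $u_{n-1}+u_{n-2}=0$;
\item $m=n-2$ would force $u_{n-1}=3u_{n-2}$, i.e.\ $u_{n-3}=0$, i.e.\ $n=3$;
\item for $m\le n-3$ one has $2u_m\le 2u_{n-3}<2u_{n-2}-u_{n-3}=u_{n-1}-u_{n-2}$.
\end{itemize}
Your checks with $i=j$ are harmless but unnecessary, since $i=j$ already yields the conclusion.

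Compared with the bare citation, your route makes two things explicit. First, the passage to negative $A$ and to arbitrary integer indices is spelled out. Second, one sees that $n>3$ is needed only for $\abs A=3$, where $n=3$ genuinely fails: $u_3(3)+u_1(3)=9=v_3(3)/2$, so $u_3(3)\equiv u_{-1}(3)\pmod 9$ although $3\not\equiv\pm1\pmod 6$. For $\abs A\ge4$, in particular for the even $A$ used in Theorem~\ref{thm:modified-reverse}, no lower bound on $n$ is needed. The citation is shorter but leaves this integral extension for the reader to verify.
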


\begin{remark}
This is the integral form of \cite[Lemma~10]{Sun92}.
\end{remark}

\begin{lemma}\label{lem:odd-valuation}
Let $A\in\Z$ with  $\abs A>2$.  Let $p$ be an odd prime, and let $r,n$ be positive
integers such that $p\mid u_r(A)$ and $p\mid u_n(A)$.  Then
\begin{equation}\label{eq:valuation-difference}
 \nu_p(u_n(A))-\nu_p(u_r(A))=\nu_p(n)-\nu_p(r).
\end{equation}
In particular, if $p\mid u_n(A)$ then
\begin{equation}\label{eq:vp-un-ge-n}
 \nu_p(u_n(A))\geq \nu_p(n).
\end{equation}
\end{lemma}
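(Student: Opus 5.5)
We prove the valuation identity by the standard "lifting the exponent" argument for Lucas sequences, built on the rank of apparition. Put $u_m=u_m(A)$ and $\Delta=A^2-4$.

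\textbf{Step 1: rank of apparition.} Since $u_0=0$ and $(u_m \bmod p)$ is periodic, the set $\{m\in\Z: p\mid u_m\}$ is nonempty. Suppose $p\mid u_m$ and $p\mid u_{m'}$. The matrix identity \eqref{eq:matrix}, $M_A^{m+m'}=M_A^mM_A^{m'}$, gives the addition formula $u_{m+m'}=u_mu_{m'+1}-u_{m-1}u_{m'}$. Hence the set is closed under addition, and by \eqref{1.20} also under negation. So it is an ideal $\omega\Z$ of $\Z$, and $\omega>0$ because $p\mid u_r$ with $r>0$. Consequently $r=\omega r'$ and $n=\omega n'$.

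If $p\mid \omega$, then $p\mid u_p$. But $u_p\equiv \Delta^{(p-1)/2}\pmod p$, obtained by expanding $2^{p-1}u_p=\sum_j\binom{p}{2j+1}A^{p-1-2j}\Delta^j$. Therefore $p\mid\Delta$. We separate this case.
\begin{itemize}
\item If $p\mid\Delta$, then the rank of apparition is $\omega=p$. Here we argue directly with the same lifting lemma as in Step 2, using $p\mid\Delta$, so that $v_m^2\equiv4$, and the expansion above.
\item Otherwise $p\nmid \omega$.
\end{itemize}
In either case it suffices to prove
\[\nu_p(u_{\omega m})=\nu_p(u_\omega)+\nu_p(m)\quad (m\ge1).\]
Subtracting this identity for $m=r'$ and $m=n'$ gives \eqref{eq:valuation-difference}. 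For \eqref{eq:vp-un-ge-n}, note that $\nu_p(u_n)-\nu_p(n)=\nu_p(u_\omega)-\nu_p(\omega)$. If $p\nmid\omega$, the right-hand side is at least $1$. If $\omega=p$, it is $\nu_p(u_p)-1\ge0$.

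\textbf{Step 2: the two multiplicative lemmas.} Let $w=u_\omega$ with $p\mid w$, and let $v=v_\omega$. We need the following two facts.
\begin{itemize}
\item[(a)] If $p\nmid m$, then $\nu_p(u_{\omega m})=\nu_p(w)$.
\item[(b)] $\nu_p(u_{\omega p})=\nu_p(w)+1$.
\end{itemize}
Induction on $\nu_p(m)$ then combines these. Write $m=p^km_0$ with $p\nmid m_0$, apply (b) $k$ times to the successive ranks $\omega p^j$, which are also multiples where $p$ divides, and then apply (a).

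Both (a) and (b) follow from the Binet-type expansion over $\Z[\sqrt\Delta]$. With $\alpha,\beta=(A\pm\sqrt\Delta)/2$ we have $\alpha\beta=1$, and $\alpha^\omega=(v+w\sqrt\Delta)/2$. Hence
\[2^{m-1}u_{\omega m}=\sum_{j}\binom{m}{2j+1}v^{m-1-2j}w^{2j+1}\Delta^{j}.\]
Reducing modulo $w^3$ leaves $2^{m-1}u_{\omega m}\equiv m v^{m-1}w$. By \eqref{uv}, $v^2-\Delta w^2=4$, so $v^2\equiv4\pmod p$ and $p\nmid v$. This settles (a), since the leading term $mv^{m-1}w$ has valuation exactly $\nu_p(w)$. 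For (b) the leading term has valuation $\nu_p(w)+1$. The next term is $\binom p3 v^{p-3}w^3\Delta$, which has valuation at least $3\nu_p(w)+1>\nu_p(w)+1$. All later terms have even higher valuation, except for the final term $w^p\Delta^{(p-1)/2}$.

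\textbf{Main obstacle.} The delicate case is this final term when $p=3$. It is $w^3\Delta$, and in the worst case its valuation is only $3\nu_p(w)$. This exceeds $\nu_p(w)+1$ because $\nu_p(w)\ge1$ gives $3\nu_p(w)\ge\nu_p(w)+2$. So the estimate survives, but the case needs checking.

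The case $p\mid\Delta$ is where the hypotheses $|A|>2$ and $p$ odd matter; $|A|>2$ also guarantees $u_m\neq0$ for $m\ne0$, so the valuations are defined. I would handle this case by taking $\omega=p$ and observing $\nu_p(u_p)=1$. Indeed, $2^{p-1}u_p=pA^{p-1}+\binom p3A^{p-3}\Delta+\cdots$. The first term has valuation exactly $1$, since $p\nmid A$ when $p\mid A^2-4$. The remaining terms are divisible by $p^2$ for $p>3$, and for $p=3$ we have $\Delta^{1}\cdot1$ with $9\mid 3\Delta$ only if $p^2\mid\Delta$. This last point needs care, and I would treat it by using only the recursion (b) from $\omega=p$ onward. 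The identity in (b) holds there because $p\mid w=u_p$ is all that was used.
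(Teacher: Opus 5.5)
Your route differs from the paper's. The paper proves nothing directly: it cites \cite[Corollary~1.6]{Sanna16}, using that for $p\nmid A^2-4$ the rank of apparition is prime to $p$, and that for $p\mid A^2-4$ one has $p\mid u_j(A)$ exactly when $p\mid j$. You instead reprove the lifting-the-exponent law by hand, and that core is sound. Your facts (a) and (b) use only $p\mid w$ and $p\nmid v$, the latter coming from $v^2-(A^2-4)w^2=4$. Hence they give $\nu_p(u_{km})=\nu_p(u_k)+\nu_p(m)$ for every $k\ge1$ with $p\mid u_k$, and your check of the term $w^3\Delta$ for $p=3$ is correct. Together with the ideal $\omega\Z$, this proves \eqref{eq:valuation-difference} with no case distinction at all. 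So your argument is self-contained, and it shows that the case split matters only for the inequality.

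There is, however, a gap exactly where the inequality needs it. You assert that $p\mid\omega$ implies $p\mid u_p$, but nothing you have proved gives this. The ideal $\omega\Z$ contains $p$ only if $\omega\mid p$, and $p\mid\omega$ says nothing of the kind. Without this step you cannot exclude $\omega=pk$ with $k>1$, or $\nu_p(\omega)\ge2$. Then $\nu_p(u_\omega)-\nu_p(\omega)$ might a priori be negative. So \eqref{eq:vp-un-ge-n}, which is the part of the lemma actually used in the interpolation theorem, is not yet established.

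The repair uses your own expansion with $m=p$ at an arbitrary index $k$. Since $p\mid\binom{p}{2j+1}$ whenever $2j+1<p$, it gives
\[2^{p-1}u_{pk}\equiv u_k^{\,p}\,\Delta^{(p-1)/2}\pmod p.\]
If $\omega=pk$, then $0<k<\omega$, so minimality gives $p\nmid u_k$. This forces $p\mid\Delta$, hence $p\mid u_p$ and $\omega=p$. Thus either $p\nmid\omega$ or $\omega=p$, and in both cases $\nu_p(\omega)\le1\le\nu_p(u_\omega)$.

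Also drop the claim $\nu_p(u_p)=1$ in the case $p\mid\Delta$. It is false for $p=3$: for example, $A=8$ gives $u_3=A^2-1=63$. Only $\nu_p(u_p)\ge1$ is needed, and that is all your general argument uses.
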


\begin{proof}
Apply \cite[Corollary~1.6]{Sanna16} to the nondegenerate Lucas sequence with
characteristic polynomial $X^2-AX+1$.  If $p\nmid A^2-4$, both indices are
multiples of the rank of apparition $\rho_p$, and $p\nmid\rho_p$; the formula
of Sanna \cite{Sanna16} gives \eqref{eq:valuation-difference}.  If $p\mid A^2-4$, then
$p\mid u_j(A)$ exactly when $p\mid j$, and the other branch of the same
corollary again gives \eqref{eq:valuation-difference}.  Formula
\eqref{eq:vp-un-ge-n} follows by comparing with the first index at which $p$
appears; equivalently, it follows directly from the displayed formulas in
\cite[Corollary~1.6]{Sanna16}.
\end{proof}

\begin{lemma} \label{lem:two-valuation}
Let $A$ be an even integer.  Then $u_n(A)$ is odd exactly when $n$ is odd.  If $n$ is
even, then
\begin{equation}\label{eq:two-valuation}
 v_2(u_n(A))=v_2(A)+v_2(n)-1.
\end{equation}
Consequently, $v_2(u_n(A))\geq v_2(n)$ whenever $2\mid u_n(A)$.
\end{lemma}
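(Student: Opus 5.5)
We read $v_2(\cdot)$ in \eqref{eq:two-valuation} as the $2$-adic order $\nu_2(\cdot)$ (not the Lucas sequence $v_n$), and assume $A\neq0$. For $A=0$ we have $u_{2m}(0)=0$, so the formula is meaningless. By \eqref{1.20} we have $u_{-n}(A)=-u_n(A)$, so it suffices to treat $n\ge 0$. The plan is a strong induction on $n$, using only the duplication formula $u_{2m}=u_mv_m$ from \eqref{uv} and the identity
\[
 v_m(A)=A\,u_m(A)-2u_{m-1}(A).
\]
This identity follows from $v_m=u_{m+1}-u_{m-1}$ (both sides satisfy the recurrence \eqref{1.19} and agree at $m=0,1$) together with $u_{m+1}=Au_m-u_{m-1}$.

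\emph{Parity.} Since $A$ is even, \eqref{1.18} gives $u_{n+1}\equiv u_{n-1}\pmod 2$. With $u_0=0$ and $u_1=1$ this yields $u_n\equiv n\pmod 2$, which is the first assertion.

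\emph{Valuation of $v_m$.} Assume \eqref{eq:two-valuation} holds for all even indices smaller than $2m$. We need $\nu_2(v_m)$, and split by the parity of $m$.
If $m$ is odd, then $u_m$ is odd, so $\nu_2(Au_m)=\nu_2(A)$. On the other hand $m-1$ is even (or zero), so by the inductive hypothesis $\nu_2(u_{m-1})\ge\nu_2(A)$, hence $\nu_2(2u_{m-1})\ge\nu_2(A)+1$. Therefore $\nu_2(v_m)=\nu_2(A)$.
If $m$ is even, then by induction $\nu_2(u_m)\ge\nu_2(A)\ge1$, so $\nu_2(Au_m)\ge2$. Meanwhile $u_{m-1}$ is odd, so $\nu_2(2u_{m-1})=1$. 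Therefore $\nu_2(v_m)=1$.

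\emph{Induction step.} Write $n=2m$. If $m$ is odd,
\[
 \nu_2(u_n)=\nu_2(u_m)+\nu_2(v_m)=0+\nu_2(A)=\nu_2(A)+\nu_2(n)-1 .
\]
If $m$ is even,
\[
 \nu_2(u_n)=\bigl(\nu_2(A)+\nu_2(m)-1\bigr)+1=\nu_2(A)+\nu_2(n)-1 .
\]
The base case $n=2$ is covered by the odd case, and is also direct from $u_2=A$.

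\emph{Consequence.} Because $A$ is even, $\nu_2(A)\ge1$. So whenever $2\mid u_n(A)$, the index $n$ is even and $\nu_2(u_n)\ge\nu_2(n)$.

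\emph{Main obstacle.} The only delicate point is organising the induction so that the estimate for $\nu_2(u_{m-1})$ is already available when computing $\nu_2(v_m)$. Strong induction over all $n$ handles this; no deeper input is needed.
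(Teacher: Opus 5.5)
Your proof is correct, and its skeleton matches the paper's. Everything rests on $u_{2m}=u_mv_m$ together with $\nu_2(v_m)=\nu_2(A)$ for odd $m$ and $\nu_2(v_m)=1$ for even $m$. The difference is in how these two facts are obtained.

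The paper unrolls the doubling in one stroke: with $n=2^eq$ and $q$ odd, it writes $u_n=u_qv_qv_{2q}\cdots v_{2^{e-1}q}$ and reads off each factor directly. Since every $v_j$ is even, $v_m=v_{m/2}^2-2\equiv2\pmod{4}$ for even $m$. For odd $q$ it simply asserts that $v_q/A$ is odd.

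You derive both facts from the single identity $v_m=Au_m-2u_{m-1}$, feeding in the valuation formula at smaller even indices, which is why you need strong induction. The paper's even case is induction-free and quicker. Your odd case, however, actually proves $\nu_2(v_q)=\nu_2(A)$, which the paper leaves unjustified, so your route is fully self-contained from the recurrence and the duplication formula.

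Your caveats are also apt. The paper's argument tacitly needs $A\neq0$ (it divides by $A$) and $n\neq0$. The $v_2$ in the statement is indeed the $2$-adic order $\nu_2$.
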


\begin{proof}
Reduction of the recurrence modulo $2$ gives $u_n(A)\equiv n\pmod2$.
Write $n=2^e q$ with $e\in\N$ and $q$ odd.  As $u_{2m}(A)=u_m(A)v_m(A)$ for any $m\in\Z$, we have
\[
 u_n(A)=u_q(A)v_q(A)v_{2q}(A)\cdots v_{2^{e-1}q}(A).
\]
For odd $q$, the quotient $v_q(A)/A$ is odd, so
$v_2(v_q(A))=v_2(A)$.  For every even index $m$, we have
$v_m(A)=v_{m/2}(A)^2-2\equiv2\pmod4$, and hence
$v_2(v_m(A))=1$.  Formula \eqref{eq:two-valuation} follows.
\end{proof}

The following result appeared in \cite[Lemma 7]{Sun92} and the basic idea came from Matiyasevich \cite{M70}.

\begin{lemma}\label{lem:square-divisibility}
Let $A$ be an integer with $|A|\gs2$.  For any $r,n\in\N$, we have 
\[
 u_r(A)^2\mid u_n(A)
\iff ru_r(A)\mid n.\]
\end{lemma}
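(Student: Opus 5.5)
The plan follows Matiyasevich's classical argument. It rests on three standard facts about $u_n=u_n(A)$ for $|A|\gs2$.

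First, $u_r\mid u_n\iff r\mid n$ for $r\ge1$. Here $(u_m)$ is a strong divisibility sequence with $\gcd(u_m,u_n)=u_{\gcd(m,n)}$. For $|A|\gs2$ we have $|u_1|<|u_2|<\cdots$, and the indices $r=0,1$ are handled separately. The case $r=0$ is trivial, since $u_0^2=0\mid u_n$ iff $n=0$ iff $0\mid n$. The case $r=1$ is also trivial, since both sides always hold when $u_1=1$. The case $A=\pm2$ is degenerate, with $u_n(\pm2)=(\pm1)^{n-1}n$; there the claim reads $r^2\mid n\iff r\cdot r\mid n$ and is immediate. So assume $|A|>2$ and $r\ge2$.

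Second, the addition formula $u_{a+b}=u_au_{b+1}-u_{a-1}u_b$. This comes from multiplying the matrices $M_A^aM_A^b$ in \eqref{eq:matrix}.

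Third, the key congruence
\[
u_{kr}\equiv k\,u_r\,u_{r+1}^{\,k-1}\pmod{u_r^2}\qquad(k\in\N).
\]
It is proved by induction on $k$ from the addition formula with $a=kr$ and $b=r$. Modulo $u_r^2$, write $u_{kr}\equiv k u_r u_{r+1}^{k-1}$ and $u_{kr+1}\equiv u_{r+1}^k$. The first is divisible by $u_r$. The second holds modulo $u_r$, and the error term is multiplied by $u_r$ when it is fed back in. More cleanly, I would use $M_A^{r}\equiv u_{r+1}I'+u_rN$ and expand $M_A^{kr}=(M_A^r)^k$ binomially modulo $u_r^2$.

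Now prove the equivalence. Suppose $u_r^2\mid u_n$. Then $u_r\mid u_n$, so $n=kr$ by the first fact. The congruence gives $u_r^2\mid k u_r u_{r+1}^{k-1}$, that is, $u_r\mid k u_{r+1}^{k-1}$. Since $\gcd(u_r,u_{r+1})=1$ (the determinant of $M_A^r$ is $1$), this yields $u_r\mid k$. Hence $ru_r\mid rk=n$. Conversely, suppose $ru_r\mid n$. Then $n=kr$ with $u_r\mid k$, and the congruence gives $u_{kr}\equiv0\pmod{u_r^2}$.

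The main obstacle is the clean derivation of the congruence $u_{kr}\equiv k u_ru_{r+1}^{k-1}\pmod{u_r^2}$. I would do it via the matrix power. Write $M_A^r=u_{r+1}E+u_rN$ for suitable fixed integer matrices; more simply, $M_A^r\equiv -u_{r-1}I+u_r M_A$ by Cayley–Hamilton, with $u_{r+1}\equiv -u_{r-1}\pmod{u_r}$. Then apply the binomial theorem modulo $u_r^2$, which is legitimate because $I$ and $M_A$ commute. Reading off the $(2,1)$ entry gives $u_{kr}\equiv k(-u_{r-1})^{k-1}u_r$. This suffices, since $\gcd(u_{r-1},u_r)=1$ as well.
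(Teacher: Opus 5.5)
Your argument is correct and is the classical Matiyasevich proof that the paper relies on; the paper gives no proof of its own and cites \cite[Lemma 7]{Sun92} and \cite{M70}. You reduce to $n=kr$ via $u_r\mid u_n\iff r\mid n$, expand $M_A^{kr}=(u_rM_A-u_{r-1}I)^k$ binomially (this is an exact identity, not just a congruence) to get $u_{kr}\equiv k(-u_{r-1})^{k-1}u_r\pmod{u_r^2}$, and conclude using $\gcd(u_r,u_{r-1})=1$. The only cosmetic point is that the case $k=0$ (i.e.\ $n=0$) should be handled directly, since the exponent $k-1$ in the congruence is then negative.
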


Let $k$ be any positive integer. In 1975,  Matiyasevich and Robinson \cite{MR75} introduced a polynomial
$J_k(X_1,\ldots,X_k,Z)$ with integer coefficients, monic of degree $2^k$
in $Z$, such that
\begin{equation}\label{eq:J-characterization}
 A_1,\ldots,A_k\in\Sq(\Z)
 \iff
 \exists z\,[J_k(A_1,\ldots,A_k,z)=0].
\end{equation}
One explicit definition is the symmetric product given in
\cite[Section~3]{MR75} and reproduced in \cite[(5.1)]{Sun21}.  Define its
homogeneous divisibility form by
\begin{equation}\label{eq:Hk}
 H_k(\mathbf X,S,T,M)
 =S^{2^k}J_k\!\left(\mathbf X,M-\frac{T}{S}\right),
\end{equation}
where the right-hand side is expanded as an integer polynomial.

\begin{lemma}\label{lem:relation-combining}
For integers $A_1,\ldots,A_k,S,T$ with $S\neq0$, we have
\[
 A_1,\ldots,A_k\in\Sq(\Z) \ \land\  S\mid T
 \iff
 \exists m\ [H_k(A_1,\ldots,A_k,S,T,m)=0].
\]
Moreover,
\begin{equation}\label{eq:H-zero}
 H_k(A_1,\ldots,A_k,0,T,m)=T^{2^k}.
\end{equation}
\end{lemma}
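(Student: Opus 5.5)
The proof rests on the facts that $J_k$ is monic of degree $2^k$ in $Z$ and has integer coefficients. First I will check that $H_k$ is a genuine integer polynomial. Write $J_k(\mathbf X,Z)=\sum_{j=0}^{2^k}c_j(\mathbf X)Z^j$ with $c_{2^k}=1$. Substituting $Z=M-T/S$ gives $S\,Z=SM-T$, so
\[
 H_k(\mathbf X,S,T,M)=\sum_{j=0}^{2^k}c_j(\mathbf X)\,(SM-T)^j\,S^{2^k-j}.
\]
This is a polynomial in $\mathbf X,S,T,M$ with integer coefficients. Setting $S=0$ kills every term with $j<2^k$. The remaining term is $(-T)^{2^k}=T^{2^k}$, since $2^k$ is even, and this proves \eqref{eq:H-zero}.

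Now take integers with $S\neq0$. From the displayed formula, $H_k(\mathbf A,S,T,m)=0$ holds exactly when $J_k(\mathbf A,m-T/S)=0$. I will prove both directions of the equivalence.

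For the forward direction, assume $A_1,\ldots,A_k\in\Sq(\Z)$ and $S\mid T$. By \eqref{eq:J-characterization} there is an integer $z$ with $J_k(\mathbf A,z)=0$. Put $m=z+T/S$, which is an integer because $S\mid T$. Then $m-T/S=z$, so $H_k(\mathbf A,S,T,m)=S^{2^k}\cdot0=0$.

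For the converse, suppose $H_k(\mathbf A,S,T,m)=0$ for some integer $m$. The rational number $q=m-T/S$ is then a root of the monic integer polynomial $J_k(\mathbf A,Z)\in\Z[Z]$. By the rational root theorem (Gauss's lemma), a rational root of a monic integer polynomial is an integer, so $q\in\Z$. Hence $T/S=m-q\in\Z$, which says $S\mid T$. Moreover $J_k(\mathbf A,q)=0$ with $q\in\Z$, so \eqref{eq:J-characterization} gives $A_1,\ldots,A_k\in\Sq(\Z)$.

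The only delicate point is making sure that monicity in $Z$ is used correctly: it is what makes the rational root integral, and it is also what makes the $S=0$ specialization reduce to $T^{2^k}$. Both follow directly from the stated properties of $J_k$ in \cite{MR75}, so I expect no real obstacle.
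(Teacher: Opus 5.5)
Your proof is correct. For \eqref{eq:H-zero} you argue as the paper does: only the $j=2^k$ term of $\sum_j c_j(\mathbf X)(SM-T)^jS^{2^k-j}$ survives at $S=0$. You also state the evenness of $2^k$ that turns $(-T)^{2^k}$ into $T^{2^k}$.

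For the equivalence the routes differ. The paper gives no argument and simply cites \cite[Lemma~5.3]{Sun21}. You derive it directly from \eqref{eq:J-characterization}, using only that $J_k$ has integer coefficients and is monic in $Z$. The rational root theorem makes the rational root $m-T/S$ an integer. That single step gives both $S\mid T$ and the integer witness that \eqref{eq:J-characterization} requires.

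The citation keeps the paper short, but it asks the reader to trust that Sun's lemma is stated for exactly this normalization of $H_k$. Your argument is self-contained and works for any monic integer polynomial $J_k$ satisfying \eqref{eq:J-characterization}. So it does not depend on the particular symmetric-product formula from \cite{MR75}.
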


\begin{proof}
The equivalence is \cite[Lemma~5.3]{Sun21}, based on~\cite{MR75}.  Since
$J_k$ is monic of degree $2^k$ in its final argument, setting $S=0$ after
homogenization leaves exactly the leading term $T^{2^k}$, proving
\eqref{eq:H-zero}.
\end{proof}

The following lemma is essentially \cite[Lemma~5.5]{Sun21}.

\begin{lemma}\label{lem:positivity}
For every $R\in\Z$, we have
\begin{equation}\label{eq:positivity}
 R>0
 \iff
 \exists z\not=0\ [(3R-4)z^2+1\in\Sq(\Z)].
\end{equation}
\end{lemma}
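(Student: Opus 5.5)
The plan is to argue directly by cases on the sign of $D:=3R-4$. The only number-theoretic inputs I expect to need are the solvability of Pell's equation and the fact that squares are $\equiv 0,1\pmod 3$; none of the Lucas-sequence lemmas should be required.

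\emph{The implication from right to left.} Suppose $R\ls 0$. Then $D=3R-4\ls -4$, so for any $z\neq 0$ we get
\[
(3R-4)z^2+1\ls -4z^2+1<0 .
\]
A negative integer is not in $\Sq(\Z)$, so no $z\ne0$ can satisfy the condition. By contraposition, the existence of such a $z$ forces $R>0$.

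\emph{The implication from left to right.} Suppose $R>0$. Then $D\gs -1$, and $D\ne0$ because $3\nmid 4$. I split into two cases.
\begin{enumerate}
\item If $D=-1$, i.e.\ $R=1$, then $z=1$ gives $Dz^2+1=0=0^2\in\Sq(\Z)$.
\item If $D>0$, then $D\equiv 2\pmod 3$. Since every square is $\equiv 0$ or $1\pmod 3$, $D$ is a positive non-square. The Pell equation $w^2-Dz^2=1$ therefore has a solution with $z\neq 0$, for instance the fundamental solution. For this $z$, $Dz^2+1=w^2\in\Sq(\Z)$.
\end{enumerate}

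The one point that needs care is excluding the case where $D$ is a positive perfect square $d^2$. There $(w-dz)(w+dz)=1$ would force $z=0$, and the equivalence would fail. The coefficient $3R-4$ is presumably chosen precisely so that the congruence $D\equiv 2\pmod 3$ rules this out. With that observation in hand, the classical existence of nontrivial Pell solutions completes the argument.
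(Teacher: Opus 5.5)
Your proof is correct. The paper gives no argument of its own here and simply cites \cite[Lemma~5.5]{Sun21}. Your case split supplies the standard reasoning behind that citation and makes the lemma self-contained: $R\le 0$ makes $(3R-4)z^2+1$ negative, $R=1$ is handled by $z=1$, and for $R\ge 2$ you apply Pell's equation, using that $3R-4\equiv 2\pmod 3$ is a positive non-square.
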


Let $\Acal\subseteq\N$ be recursively enumerable. Then it is Diophantine by Matiyasevich's theorem
\cite{M70}. Specializing \cite[Theorem~3.1]{Sun21} to the prime $p=2$
gives effectively constructible integer polynomials
\[
 b=b(a,f),\qquad X=X(a,f,g),\qquad Y=Y(a,f,g)
\]
and a positive polynomial bound
\[
 \Cscr=2^{\alpha_1}b^{\alpha_2}
\]
for fixed positive integers $\alpha_1,\alpha_2$, with
\begin{equation}\label{eq:b-def}
 b=1+3(2a+1)f.
\end{equation}
They have the following properties.

\begin{proposition}\label{prop:source-coding}
Let $a\in\N$. 
\begin{enumerate}[label=\textup{(\roman*)}]
\item If $a\in\Acal$, then the witnesses may be chosen with
\[
 f>0,\quad b\in\Sq(\Z),\quad b\in2\uparrow ,\quad
 b\leq g<\Cscr,
 \quad
 Y\mid\binom{2X}{X}.
\]
They may in fact be chosen arbitrarily large.
\item If
\[
 f\neq0,\quad 0\leq g<2\Cscr,
 \quad b\in\Sq(\Z),\quad b\in2\uparrow ,
 \quad Y\mid\binom{2X}{X},
\]
then $a\in\Acal$.
\item Whenever $a\in\N$, $f\neq0$, $b\in\Sq(\Z)$ and
$0\leq g<2\Cscr$, we have
\begin{equation}\label{eq:source-size}
 X>3b\ \ \t{and}\ \ Y>\max\{b,2^8\}.
\end{equation}
\end{enumerate}
\end{proposition}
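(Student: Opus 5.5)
This proposition is a direct specialization of \cite[Theorem~3.1]{Sun21} to the prime $p=2$, so the proof is essentially bookkeeping. The plan is to recall the exact statement of that theorem, which gives the three parts for a general prime $p$, and to check that the specialization produces the listed formulas. Throughout, ``$b\in p\uparrow$'' is replaced by $b\in2\uparrow$ and the bound $\Cscr$ is read off with $p=2$. By Matiyasevich's theorem \cite{M70}, $\Acal$ is Diophantine, so \cite[Theorem~3.1]{Sun21} applies. It supplies the polynomials $b,X,Y$ and the bound $\Cscr=2^{\alpha_1}b^{\alpha_2}$, with $b=1+3(2a+1)f$ as in \eqref{eq:b-def}.

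For (i), assume $a\in\Acal$. I would follow the construction in \cite{Sun21}. First choose $f$ so that $b=1+3(2a+1)f$ is an even power of $2$, hence both a square and in $2\uparrow$. This is possible because $2$ has even multiplicative order modulo $3(2a+1)$, so $4^m\equiv1\pmod{3(2a+1)}$ for infinitely many $m$. Then choose $g$ in $[b,\Cscr)$ encoding the computation, so that $Y\mid\binom{2X}{X}$. Arbitrarily large witnesses come from taking $m$ large, and hence $f$ large. Part (ii) is the soundness direction of the same theorem, taken with the relaxed range $0\le g<2\Cscr$. I would check that the cited proof only uses $g<2\Cscr$ (the factor $2$ being absorbed into $\alpha_1$) together with $f\ne0$.

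For (iii), the inequalities $X>3b$ and $Y>\max\{b,2^8\}$ are degree or positivity facts about the explicit polynomials $X,Y$ once $b\ge1$ is a nonzero square and $g\ge0$. If the original theorem gives only $X,Y>b$, say, I would enlarge $X$ and $Y$ by a harmless factor, or note that $b=1+3(2a+1)f$ is a nonzero square, so $b\ge1$, and that $b\in2\uparrow$ forces $f>0$ in the relevant cases. The main obstacle is (iii). There the hypotheses do not include $b\in2\uparrow$, so I must verify directly from the formulas in \cite{Sun21} that $X$ and $Y$ are large for every nonzero $f$ with $b$ a square. A negative $f$ makes $b<0$, which cannot be a square, so only $f>0$ occurs. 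It then remains to confirm that the leading terms dominate and give $Y>2^8$.
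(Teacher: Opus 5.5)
Your plan matches the paper's, which gives no independent argument either: it specializes \cite[Theorem~3.1]{Sun21} to $p=2$ for (i)--(ii), and takes the strict inequalities of (iii) from the proof of (3.4) in \cite{Sun21}, which works under exactly the weaker hypotheses $f\neq0$, $b\in\Sq(\Z)$, $0\le g<2\Cscr$ that you correctly single out. Drop your fallback remarks, however: the slack between $g<\Cscr$ in (i) and $g<2\Cscr$ in (ii) must come from the cited soundness proof and cannot be created by absorbing a factor into $\alpha_1$ (the same $\Cscr$ occurs in both parts), and rescaling $X$ or $Y$ would not preserve $Y\mid\binom{2X}{X}$.
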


\begin{remark}
This follows from \cite[Theorem~3.1 and the proof of (3.4)]{Sun21}, with $p=2$.
The displayed strict inequalities are the inequalities obtained in that proof.
\end{remark}

We shall also use the construction and estimates in the proofs of
\cite[Theorems~4.12 and~4.13]{Sun21}.  The precise specialization will be
stated in Section \ref{sec:bridge}; the only portion replaced in this paper is the
call to  two-witness Lucas certificate of Sun \cite{Sun21}.

\section{A modified Lucas certificate}\label{sec:modified}
\setcounter{equation}{0}

Let $A,B,C,\xi,y\in\Z$.  Define
\begin{align}
 D&=(A^2-4)C^2+4,\label{eq:D}\\
 E&=C^2D\xi,\label{eq:E}\\
 F&=4(A^2-4)E^2+1,\label{eq:F}\\
 G^*&=2(A+2)(A-2)^2E^2-1,\label{eq:Gstar}\\
 \Omega&=16E,\label{eq:Omega}\\
 H^*&=C+(B-C)F+\Omega Fy,\label{eq:Hstar}\\
 I^*&=((G^*)^2-1)(H^*)^2+1.\label{eq:Istar}
\end{align}

\begin{lemma}\label{lem:structural-identities}
If $A$ is even, then
\begin{align}
 2G^*&=(A-2)F-A,\label{eq:key-identity}\\
 F&\equiv1\pmod{\Omega\xi},\label{eq:F-cong}\\
 2G^*&\equiv-2\pmod{\Omega\xi}.\label{eq:G-cong}
\end{align}
Moreover,
\begin{equation}\label{eq:coprime-Omega}
 \gcd(F,\Omega)=1.
\end{equation}
\end{lemma}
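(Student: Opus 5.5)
All four claims will follow from direct algebraic manipulation of the definitions \eqref{eq:D}--\eqref{eq:Omega}. The key step is to exhibit $F-1$ as an explicit integer multiple of $\Omega\xi$ and of $\Omega$. No earlier lemma is needed. The evenness of $A$ enters at exactly one point: it gives $4\mid A^2-4$, so that $(A^2-4)/4$ is an integer.

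\emph{Step 1: the identity \eqref{eq:key-identity}.} Expand $(A-2)F-A$ using \eqref{eq:F}:
\[
 (A-2)F-A=4(A-2)(A^2-4)E^2+(A-2)-A=4(A+2)(A-2)^2E^2-2 .
\]
By \eqref{eq:Gstar} this equals $2G^*$. This step holds for every integer $A$; parity is not used.

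\emph{Step 2: the factorisation of $F-1$.} Put $e=(A^2-4)/4\in\Z$, which is an integer because $A$ is even. Using $E=C^2D\xi$ and $\Omega=16E$, rewrite one factor $E$ of $E^2$ as $C^2D\xi$:
\[
 F-1=4(A^2-4)E\cdot C^2D\xi=16E\xi\cdot eC^2D=\Omega\xi\cdot eC^2D .
\]
This is an identity of integers, so it proves \eqref{eq:F-cong}. The case $\xi=0$ needs no separate treatment: there the congruence modulo $0$ means equality, and indeed $E=0$ forces $F=1$.

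\emph{Step 3: the congruence \eqref{eq:G-cong}.} Step~1 gives $2G^*+2=(A-2)F-A+2=(A-2)(F-1)$. By Step~2, the right-hand side is divisible by $\Omega\xi$.

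\emph{Step 4: the coprimality \eqref{eq:coprime-Omega}.} Again write only one factor of $E^2$ differently:
\[
 F-1=16E\cdot eE=\Omega\cdot eE .
\]
So any common divisor of $F$ and $\Omega$ divides $F-\Omega eE=1$. If $E=0$, then $\Omega=0$ and $F=1$, and still $\gcd(1,0)=1$.

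The only point needing care is Step~2, together with the degenerate cases $\xi=0$ or $E=0$ and the role of the parity hypothesis. Once the two explicit factorisations of $F-1$ are written down, everything else is immediate.
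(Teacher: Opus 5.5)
Your proof is correct and takes essentially the paper's approach of direct algebraic verification from the definitions. The differences are cosmetic: you get \eqref{eq:G-cong} from $2G^*+2=(A-2)(F-1)$ rather than computing the quotient by $\Omega\xi$ directly, and you get coprimality from $F\equiv1\pmod{\Omega}$ rather than from $F\equiv1\pmod E$ plus the oddness of $F$; writing $F-1$ as an explicit integer multiple of $\Omega\xi$ also covers the degenerate case $\xi=0$, where the paper's division is formally undefined.
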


\begin{proof}
Expanding \eqref{eq:F} gives
\[
 (A-2)F-A
 =-2+4(A-2)(A^2-4)E^2
 =2G^*.
\]
Since $A$ is even, $4\mid A^2-4$ and $4\mid(A-2)^2$.  Using
$E=C^2D\xi$, we obtain
\[
 \frac{F-1}{\Omega\xi}
 =\frac{(A^2-4)C^2D}{4}\in\Z
\]
and
\[
 \frac{2G^*+2}{\Omega\xi}
 =\frac{(A+2)(A-2)^2C^2D}{4}\in\Z.
\]
Finally, $F\equiv1\pmod E$ and $F$ is odd, while $\Omega=16E$.
\end{proof}

The next theorem replaces the reverse implication of
\cite[Lemma~4.6]{Sun21}.

\begin{theorem}\label{thm:modified-reverse}
Let $A,B,C\in\Z$ satisfy
\begin{equation}\label{eq:reverse-range}
 2\mid A,
 \qquad
 2\nmid B,
 \qquad
 1<B<\frac{\abs A}{2}-1,
 \ \ \t{and}\ \ 
 A-2\mid C-B.
\end{equation}
If there exist $\xi,y\in\Z$ with $\xi\neq0$ such that
\begin{equation}\label{eq:DFI-square}
 DFI^*\in\Sq(\Z),
\end{equation}
then
\[
 C=u_B(A).
\]
\end{theorem}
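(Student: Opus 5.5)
The plan is to split the square condition \eqref{eq:DFI-square} into three separate Pell-type conditions, translate each into a Lucas-sequence statement via Lemma~\ref{lem:pell-lucas}, and then compare indices using Lemma~\ref{lem:index-separation} and the valuation lemmas.

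\emph{Step 1: each factor is a square.} First, $D>0$, $F>0$ and $I^*>0$; the last holds since $(G^*)^2\ge1$. Next I will show that $D$, $F$, $I^*$ are pairwise coprime.
\begin{itemize}
\item Since $D\mid E$ and $F\equiv1\pmod E$, we get $\gcd(D,F)=1$.
\item By \eqref{eq:G-cong}, $G^*\equiv-1\pmod{8E}$, so $I^*\equiv1\pmod E$ and hence $\gcd(D,I^*)=1$.
\item By \eqref{eq:key-identity}, $2G^*\equiv-A\pmod F$. Also $H^*\equiv C\pmod F$. Hence
\[
 4I^*\equiv(A^2-4)C^2+4=D\pmod F .
\]
Since $F$ is odd and $\gcd(D,F)=1$, this gives $\gcd(F,I^*)=1$.
\end{itemize}
Therefore $D$, $F$, $I^*$ are all squares.

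\emph{Step 2: read off the Lucas indices.} Apply Lemma~\ref{lem:pell-lucas} three times (using \eqref{1.20} to normalize signs):
\begin{itemize}
\item $D\in\Sq(\Z)$ gives $C=u_n(A)$ for some $n$.
\item $4F=(A^2-4)(4E)^2+4$, so $F\in\Sq(\Z)$ gives $4E=u_k(A)$ for some $k$. By \eqref{uv}, $F=(v_k(A)/2)^2$.
\item $4I^*=((2G^*)^2-4)(H^*)^2+4$, so $I^*\in\Sq(\Z)$ gives $H^*=u_m(2G^*)$ for some $m$.
\end{itemize}
Now reduce $H^*$ modulo two different moduli.
\begin{itemize}
\item Modulo $F$: since $2G^*\equiv-A$, we get $u_m(2G^*)\equiv u_m(-A)=\pm u_m(A)$. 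Since $H^*\equiv C=u_n(A)$, this gives $u_m(A)\equiv\pm u_n(A)\pmod{v_k(A)/2}$.
\item Modulo $\Omega$: by \eqref{eq:F-cong} and \eqref{eq:G-cong}, $H^*\equiv B$. Also $u_m(-2)=(-1)^{m+1}m$. Hence $m\equiv\pm B\pmod{\Omega}$.
\end{itemize}
Lemma~\ref{lem:index-separation} needs $|k|>3$; this follows from $|E|\ge1$ being large relative to $A$. It then gives $m\equiv\pm n\pmod{2k}$, after replacing indices by absolute values and absorbing signs.

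\emph{Step 3: make the two moduli comparable.} This is the main obstacle. I need a large common divisor of $2k$ and $\Omega=16E$ that is also large relative to $|n|$ and $B$.
\begin{itemize}
\item Since $u_n(A)^2=C^2$ divides $4E=u_k(A)$, Lemma~\ref{lem:square-divisibility} gives $nu_n(A)\mid k$. Thus $nC\mid k$, and in particular $|n|\le|k|$ with $n$ small compared to $k$.
\item For a factor of $E$ inside $k$, I would use the odd-prime and $2$-adic valuation Lemmas~\ref{lem:odd-valuation} and~\ref{lem:two-valuation} on $u_k(A)=4E$. The aim is to show that every prime power dividing $\Omega/\gcd$ is controlled, so that $\gcd(2k,\Omega)$ contains a factor exceeding $2(|n|+B)$.
\end{itemize}
If that factor can be secured, then $m\equiv\pm n$ and $m\equiv\pm B$ modulo a common modulus $M>2(|n|+B)$, which forces $|n|=B$.

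\emph{Step 4: finish.} It remains to fix the sign and exclude $n=-B$. Since $u_j(A)\equiv u_j(2)=j\pmod{A-2}$, the hypothesis $A-2\mid C-B$ gives $n\equiv B\pmod{A-2}$. If $n=-B$, then $A-2\mid 2B$, which contradicts $0<2B<|A|-2$. Hence $n=B$ and $C=u_B(A)$.

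I would first try Step 3 by choosing to compare with the modulus $\gcd(2k,\Omega)$ directly. Should that fail, the fallback is to use the freedom in $\xi$: $E$ contains the factor $C^2D$, which is itself large relative to $n$, and one would exploit this factor instead.
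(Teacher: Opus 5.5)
Your Steps 1, 2 and 4 are essentially fine. Step 3, however, is the whole content of the theorem, and you never carry it out, as you yourself acknowledge. Neither of your proposed routes would close it. The valuation Lemmas~\ref{lem:odd-valuation} and~\ref{lem:two-valuation}, applied to $u_k(A)=4E$, only give \emph{upper} bounds $\nu_p(k)\le\nu_p(4E)$ for primes $p\mid 4E$. That says the $E$-supported part of $k$ is absorbed by $\Omega$, which is the mechanism of the positive direction, Theorem~\ref{thm:full-interpolation}. It gives no \emph{lower} bound on the size of $\gcd(2k,\Omega)$. The fallback of exploiting ``the freedom in $\xi$'' is not available either: in this direction $\xi$ is an arbitrary given nonzero integer.

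The missing idea is to apply Lemma~\ref{lem:square-divisibility} to the larger square $C^2D$ rather than to $C^2$. Since $C=u_n(A)$, the identity \eqref{uv} gives $D=v_n(A)^2$, hence $C^2D=u_{2n}(A)^2=U^2$ with $U=|u_{2n}(A)|$. From $U^2\mid 4E=u_k(A)$ you get $2|n|\,U\mid k$. So $2U$ divides both $2k$ and $\Omega=16U^2\xi$. Reducing your two congruences modulo $2U$ gives $n\equiv\pm B\pmod{2U}$. Then $U\ge\max\{2|n|,|A|\}$ together with $B<|A|/2-1$ yields $|n|+B<U$, forcing $n=\pm B$. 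The same divisibility gives $|k|\ge 2|A|>3$, which replaces your vague justification of the hypothesis of Lemma~\ref{lem:index-separation}.

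Your own relation $nC\mid k$, combined with $C\mid E$, does supply a common divisor $2C$ of $2k$ and $\Omega$. But that modulus is useless when $C=\pm1$, so you would also have to exclude $|n|=1$ using $A-2\mid C-B$; you do not do this.

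Finally, you must prove $C\neq0$ at the outset. If $C=0$, then $E=\Omega=0$ and $DFI^*=4$ is a square for every $\xi,y$. So the hypothesis $A-2\mid C-B$ with $0<B<|A-2|$ has to be used before Steps 2--3, not only in Step 4.
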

\begin{proof}
The range in \eqref{eq:reverse-range} implies $\abs A>2$.  First $C\neq0$:
otherwise $A-2\mid B$, whereas $0<B<\abs{A-2}$.  Hence $E\neq0$. Moreover, by 
$\abs A\geq4$ and \eqref{eq:Gstar}, we have $\abs{G^*}>1$ and hence $D,F,I^*>0$.

Since $D\mid E$, we have
\[
 F\equiv1\pmod D,\qquad G^*\equiv-1\pmod D,\qquad I^*\equiv1\pmod D.
\]
Also, by \eqref{eq:key-identity} and $H^*\equiv C\pmod F$, we have
\[
 4I^*=((2G^*)^2-4)(H^*)^2+4\equiv D\pmod F.
\]
Thus the positive integers $D,F,I^*$ are pairwise coprime, since $F$ is odd.
The hypothesis \eqref{eq:DFI-square} therefore makes each of them a square.
By Lemma \ref{lem:pell-lucas} there are $s,\mu,t\in\Z$ such that
\begin{equation}\label{eq:three-indices}
 C=u_s(A),\qquad 4E=u_\mu(A),\qquad H^*=u_t(2G^*).
\end{equation}

Because $C\neq0$, also $s\neq0$.  Put
\[
 n=\abs\mu,\qquad r=2\abs s,\qquad U=\abs{u_r(A)}.
\]
By \eqref{uv},
\[
 C^2D=u_s(A)^2v_s(A)^2=u_{2s}(A)^2=U^2.
\]
As $4E=4C^2D\xi=u_\mu(A)$ and $\xi\neq0$, we have
$u_r(A)^2\mid u_n(A)$.  By Lemma 
\ref{lem:square-divisibility},
\begin{equation}\label{eq:strong-index-divisibility}
 rU\mid n.
\end{equation}
Note that $U=u_r(a)$ with $a=\abs A$. A straightforward
induction from the recurrence gives $u_j(a)\geq j$ for $j\geq1$ and
$u_j(a)\geq a$ for $j\geq2$.  Hence $n\geq rU\geq2a\geq8$, so $n>3$.
Furthermore,
\[
 4F=(A^2-4)u_\mu(A)^2+4=v_n(A)^2.
\]
Since $A$ is even, $v_n(A)$ is even, and
$\abs{v_n(A)}=v_n(a)>0$.  Thus, for $k=v_n(a)/2$,
we have
 $F=k^2$.

The congruence $A-2\mid C-B$ shows that $C$ is odd.  Hence $s$ is odd;
\eqref{eq:Hstar} then shows that $H^*$, and therefore $t$, is odd.  Modulo
$F$, \eqref{eq:key-identity} gives
\[
 u_s(A)=C\equiv H^*=u_t(2G^*)\equiv u_t(-A)=u_t(A)\pmod F.
\]
In particular this holds modulo $k$, so Lemma \ref{lem:index-separation} yields
\[
 s\equiv t\pmod{2n}\quad\text{or}\quad s\equiv-t\pmod{2n}.
\]
Since \eqref{eq:strong-index-divisibility} implies $U\mid n$,
\begin{equation}\label{eq:s-t-mod-U}
 s\equiv t\pmod{2U}\quad\text{or}\quad s\equiv-t\pmod{2U}.
\end{equation}

On the other hand, Lemma \ref{lem:structural-identities} and the oddness of $t$
give
\[
 H^*\equiv B\pmod\Omega,
 \qquad
 H^*=u_t(2G^*)\equiv u_t(-2)=t\pmod\Omega.
\]
Here $\Omega=16E=16U^2\xi$, so $2U\mid\Omega$ and hence
$t\equiv B\pmod{2U}$.  This, together with \eqref{eq:s-t-mod-U}, implies that
\begin{equation}\label{eq:s-pm-B-U}
 s\equiv B\pmod{2U}\quad\text{or}\quad s\equiv-B\pmod{2U}.
\end{equation}
But $U=u_r(a)\geq r=2\abs s$, while
$U\geq a$ and $B<a/2-1<U/2$.  Therefore
$\abs s+B<U$.  The relevant integer $s\mp B$ in
\eqref{eq:s-pm-B-U} is a multiple of $2U$ with absolute value smaller than $U$, and hence it is
zero.  Thus $s=B$ or $s=-B$.

Finally, reduction modulo $A-2$ gives
$C=u_s(A)\equiv u_s(2)=s\pmod{A-2}$, while $C\equiv B\pmod{A-2}$.
If $s=-B$, then $A-2\mid2B$, contrary to
$2B<\abs A-2\leq\abs{A-2}$.  Hence $s=B$, so $C=u_B(A)$.
\end{proof}

\section{Full residue-class interpolation}\label{sec:interpolation}
\setcounter{equation}{0}

The positive direction supplies more than existence of two Lucas witnesses.
It allows the second witness to be prescribed modulo the first.

\begin{theorem}[Full residue-class interpolation]\label{thm:full-interpolation}
Let $A>2$ be even, let $B>0$ be odd, and put
\[
 C=u_B(A).
\]
For every positive integer $N$, there is a positive integer $\xi$ divisible
by $N$ with the following property: for every residue class $c\pmod\xi$,
there exists $y\in\Z$ such that, with the definitions
\eqref{eq:D}--\eqref{eq:Istar},
\[
 y\equiv c\pmod\xi
 \qquad\text{and}\qquad
 DFI^*\in\Sq(\Z).
\]
\end{theorem}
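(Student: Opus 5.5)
The plan is to build $\xi$ from a Lucas index $\mu$ chosen so that $D$ and $F$ become squares automatically. We then reduce the requirement $I^*\in\Sq(\Z)$ to a simultaneous congruence for a single index $t$, solved by the Chinese remainder theorem.

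\textbf{Step 1: $D$ and $F$.} The factor $D$ is a square for free: by \eqref{uv}, $D=(A^2-4)u_B(A)^2+4=v_B(A)^2$. Put $U=u_{2B}(A)$, so that $C^2D=U^2$. I will choose $\mu>0$ with $4NU^2\mid u_\mu(A)$ and set $\xi=u_\mu(A)/(4U^2)$. Then $N\mid\xi$ and $4E=u_\mu(A)$, so
\[
4F=(A^2-4)u_\mu(A)^2+4=v_\mu(A)^2
\]
and $F=(v_\mu(A)/2)^2$ is a square. Such $\mu$ exist, since every positive integer divides some $u_m(A)$ (for instance via Lemma \ref{lem:square-divisibility}, with $\mu$ a multiple of $2B\,U$). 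I will impose one further arithmetic condition on $\mu$ in Step 3, namely that $\mu$ times a suitable power of $2$ divides $u_\mu(A)$.

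\textbf{Step 2: reduce $I^*$ to an index.} Because $(2G^*)^2-4=4((G^*)^2-1)$, we have $4I^*=((2G^*)^2-4)(H^*)^2+4$. Hence by Lemma \ref{lem:pell-lucas}, $I^*$ is a square as soon as $H^*=u_t(2G^*)$ for some $t$. As $y$ runs over the class $c\pmod\xi$, $H^*$ runs over the full class $C+(B-C)F+\Omega Fc\pmod{\Omega F\xi}$. By \eqref{eq:F-cong} and \eqref{eq:coprime-Omega}, $F$ is coprime to $\Omega\xi$, so it suffices to find an odd $t$ with
\[
u_t(2G^*)\equiv C\pmod F,\qquad u_t(2G^*)\equiv B+\Omega c\pmod{\Omega\xi}.
\]
Both conditions should simplify. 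Modulo $\Omega\xi$ we have $2G^*\equiv-2$, so $u_t(2G^*)\equiv u_t(-2)=t$ for odd $t$. Modulo $F$, \eqref{eq:key-identity} gives $2G^*\equiv-A$, so $u_t(2G^*)\equiv u_t(-A)=u_t(A)$ for odd $t$, by \eqref{1.20}. Therefore it suffices to have $t\equiv B\pmod\pi$, where $\pi$ is the period of $(u_j(A))$ modulo $F$, together with $t\equiv B+\Omega c\pmod{\Omega\xi}$; oddness then follows from $B$ odd and $\Omega$ even.

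\textbf{Step 3: compatibility (the main obstacle).} By the Chinese remainder theorem this system is solvable iff $\gcd(\pi,\Omega\xi)\mid\Omega c$, and for arbitrary $c$ this means $\gcd(\pi,\Omega\xi)\mid\Omega$. To bound $\pi$ I will use \eqref{eq:matrix} together with $u_{m+\mu}=u_mv_\mu-u_{m-\mu}$:
\begin{itemize}
\item modulo $v_\mu$ this gives $M_A^{2\mu}\equiv-I$, hence $M_A^{4\mu}=I+v_\mu X$ for an integer matrix $X$;
\item raising to a further power involving $v_\mu/2$ (plus a small power of $2$) then yields $\pi\mid 2^j\mu\, v_\mu$ for a small fixed $j$.
\end{itemize}
Since $v_\mu^2=4F$ and $F$ is coprime to $\Omega\xi$, we get $\gcd(v_\mu,\Omega\xi)\mid 2$. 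So what remains is to ensure $2^{j'}\mu\mid\Omega=4u_\mu(A)$. To arrange this I will choose $\mu=r\,u_r(A)\,m$, with $r$ chosen so that every prime dividing $\mu$ already divides $u_r(A)$ (for example $r$ even and divisible by $2BU N$). Then Lemma \ref{lem:square-divisibility} gives $u_r^2\mid u_\mu$, and Lemmas \ref{lem:odd-valuation} and \ref{lem:two-valuation} give $\nu_p(u_\mu)\ge\nu_p(\mu)$ for each prime $p\mid\mu$, with extra room at $p=2$ from $\nu_2(A)\ge1$ and the $-1$ in \eqref{eq:two-valuation}.

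I expect the exact $2$-adic bookkeeping in the period bound, that is, pinning down $j$ and checking that $4u_\mu$ absorbs it, to be the most delicate point. If it fails, the fallback is to enlarge $\mu$ by a power of $2$, since \eqref{eq:two-valuation} shows $\nu_2(u_\mu)$ grows in step with $\nu_2(\mu)$.
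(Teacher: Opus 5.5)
There is a gap in your Step 3. Up to that point your route is the paper's. You use the same $\xi=u_\mu(A)/(4C^2D)$ with $F=(v_\mu(A)/2)^2$. You make the same reduction of $I^*\in\Sq(\Z)$ to an odd index $t$ with $t\equiv B$ modulo a period $\pi$ of $(u_j(A))$ mod $F$ and $t\equiv B+\Omega c\pmod{\Omega\xi}$. Your period bound is also the paper's: with $w=v_\mu(A)/2$, which is odd, Cayley--Hamilton gives $M_A^{2\mu}=2wM_A^{\mu}-I_2$, hence $M_A^{4\mu w}\equiv I_2\pmod{w^2}$. So $\pi\mid 4\mu w$, which is the paper's $\lambda=4nr$, and no extra power of $2$ is needed.

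In Step 3 you replace the compatibility condition $\gcd(\pi,\Omega\xi)\mid\Omega$ by the global condition $4\mu\mid\Omega$, i.e.\ $\mu\mid u_\mu(A)$. This fails for general $\mu$: for $A=4$ one has $5\mid u_m(4)$ iff $3\mid m$, so $10\nmid u_{10}(4)$. Your recipe for forcing it does not work either. Requiring every prime of $\mu=r\,u_r(A)\,m$ to divide $u_r(A)$ requires in particular every prime of $r$ to divide $u_r(A)$. Taking $r$ even and divisible by $2BUN$ does not guarantee this: with $A=4$, $B=1$, $N=5$, $r=40$ we get $5\mid r$ but $5\nmid u_{40}(4)$. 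A suitable $\mu$ could be built by a closure argument over ranks of apparition, but you have not given one, and it is not needed.

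The missing observation is that $\gcd(\pi,\Omega\xi)\mid\Omega$ is a condition only at primes $p\mid\xi$; at all other primes it holds trivially. For $p\mid\xi$:
\begin{itemize}
\item $w$ is prime to $p$, because $w^2=F\equiv1\pmod{\xi}$, so $\nu_p(\pi)\le\nu_p(4\mu)$.
\item Since $p\mid\xi\mid u_\mu(A)$, Lemmas \ref{lem:odd-valuation} and \ref{lem:two-valuation} give $\nu_p(u_\mu(A))\ge\nu_p(\mu)$.
\item Hence $\nu_p(4\mu)\le\nu_p(4u_\mu(A))=\nu_p(\Omega)$.
\end{itemize}
This works for every $\mu$ with $4NU^2\mid u_\mu(A)$, with no special choice of $\mu$ and no $2$-adic enlargement. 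It is exactly the paper's claim $\gcd(d,\xi)=1$. After it, your CRT step is the paper's choice $j=B+\Omega L$ with $d\mid L$ and $L\equiv c\pmod\xi$.

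A smaller point: you should justify the existence of $\mu$ with $4NU^2\mid u_\mu(A)$ as the paper does. Use the finite order of $M_A$ in $\operatorname{GL}_2(\Z/4NU^2\Z)$ together with \eqref{eq:matrix}. Lemma \ref{lem:square-divisibility} alone only yields $U^2\mid u_\mu(A)$, not the factor $4N$.
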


\begin{proof}
Since $C=u_B(A)$, by \eqref{uv} we have
\[
 D=v_B(A)^2.
\]
Set
\[
 M=4NC^2D.
\]
The matrix $M_A$ is invertible modulo $M$.  Its class therefore has finite
order in $\operatorname{GL}_2(\Z/M\Z)$.  Choose $n>0$ such that
$M_A^n\equiv I_2\pmod M$.  By \eqref{eq:matrix},
$M\mid u_n(A)$.  Define
\begin{equation}\label{eq:xi-choice}
 \xi=\frac{u_n(A)}{4C^2D},
 \qquad
 E=\frac{u_n(A)}4,
 \qquad
 r=\frac{v_n(A)}2.
\end{equation}
Then $N\mid\xi$, and the expressions in \eqref{eq:E} and
\eqref{eq:F} agree with these choices.  In particular,
\begin{equation}\label{eq:F-r2}
 F=r^2.
\end{equation}
Because $4\mid u_n(A)$ and $A$ is even, $n$ is even.  Moreover,
$v_n(A)=v_{n/2}(A)^2-2\equiv2\pmod4$, so $r$ is odd.

We next construct an explicit period modulo $F$.  Put $N_A=M_A^n$.  Then
$\operatorname{tr}(N_A)=2r$ and $\det N_A=1$. By the Cayley--Hamilton theorem in linear algebra,
\[
 N_A^2-2rN_A+I_2=0.
\]
Since $r$ is odd, the binomial theorem gives
\[
 N_A^{2r}=(-I_2+2rN_A)^r\equiv-I_2\pmod{r^2},
\]
and hence
\begin{equation}\label{eq:period}
 M_A^{\lambda}\equiv I_2\pmod F,
 \ \t{with}\ 
 \lambda=4nr.
\end{equation}

Recall that $\Omega=16E=4u_n(A)$, and set
\begin{equation}\label{eq:d-def}
 d=\frac{\lambda}{\gcd(\lambda,\Omega)}.
\end{equation}
We claim that
\begin{equation}\label{eq:d-xi-coprime}
 \gcd(d,\xi)=1.
\end{equation}
Indeed, \eqref{eq:F} and \eqref{eq:E} show that
$r^2=F\equiv1\pmod{\xi^2}$, so $\gcd(r,\xi)=1$.  If a prime
$p$ divides $\xi$, then Lemmas \ref{lem:odd-valuation} and \ref{lem:two-valuation} give
$\nu_p(u_n(A))\geq \nu_p(n)$.  Therefore
\[
 \nu_p(\lambda)=\nu_p(4n)
 \leq \nu_p(4u_n(A))=\nu_p(\Omega).
\]
Every prime-power factor of $\lambda$ supported on $\xi$ is removed by
$\gcd(\lambda,\Omega)$, proving \eqref{eq:d-xi-coprime}.

Fix a residue class $c\pmod\xi$.  By the Chinese Remainder Theorem, choose
an arbitrarily large positive integer $L$ such that
\begin{equation}\label{eq:L-congruences}
 d\mid L \ \ \t{and}\ \ 
 L\equiv c\pmod\xi.
\end{equation}
The definition of $d$ implies
\begin{equation}\label{eq:lambda-divides}
 \lambda\mid\Omega L.
\end{equation}
Let
\[
 j=B+\Omega L.
\]
Then $j$ is positive and odd.  Define
\[
 H_0=u_j(2G^*).
\]
Modulo $F$, equation \eqref{eq:key-identity}, 
\eqref{eq:period} and \eqref{eq:lambda-divides} give
\[
 H_0\equiv u_j(-A)=u_j(A)
 \equiv u_B(A)=C\pmod F.
\]
Modulo $\Omega$, equation \eqref{eq:G-cong}  gives
\[
 H_0\equiv u_j(-2)=j\equiv B\pmod\Omega.
\]
As $\gcd(F,\Omega)=1$, the integer
\begin{equation}\label{eq:y-definition}
 y=\frac{H_0-C-(B-C)F}{\Omega F}
\end{equation}
is well defined, and $H^*=H_0$.

It remains to compute $y$ modulo $\xi$.  Working modulo $\Omega\xi$ and
using \eqref{eq:F-cong} and \eqref{eq:G-cong}, we obtain
\[
 H_0\equiv u_j(-2)=j=B+\Omega L\pmod{\Omega\xi},
\]
whereas
\[
 C+(B-C)F\equiv B\pmod{\Omega\xi}.
\]
Consequently,
\[
 \Omega Fy\equiv\Omega L\pmod{\Omega\xi}.
\]
After dividing the integer divisibility relation by $\Omega$ and using
$F\equiv1\pmod\xi$, we get
\[
 y\equiv L\equiv c\pmod\xi.
\]
Finally, $D=v_B(A)^2$, $F=r^2$, and \eqref{uv} applied to
$H_0=u_j(2G^*)$ gives
\[
 I^*=\left(\frac{v_j(2G^*)}{2}\right)^2.
\]
Thus $DFI^*$ is a square.
\end{proof}

\begin{remark}
The essential point in Theorem \ref{thm:full-interpolation} is
\eqref{eq:d-xi-coprime}.  The factor $r$ in the explicit period is already
coprime to $\xi$, and $\Omega=4u_n(A)$ absorbs the entire $\xi$-supported
part of the remaining factor $4n$. 
\end{remark}

\section{Merging divisibility and nonvanishing}\label{sec:merger}
\setcounter{equation}{0}

Define
\begin{equation}\label{eq:Pi}
 \Pi(X)=(X^2-13)(X^2-17)(X^2-221).
\end{equation}

\begin{lemma}\label{lem:universal-root}
The polynomial $\Pi$ has no zero in $\Z$.  For every nonzero integer $q$,
there is $c\in\Z$ such that
\[
 q\mid\Pi(c).
\]
\end{lemma}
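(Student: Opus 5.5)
First I will check that $\Pi$ has no integer zero. This is immediate because none of $13$, $17$, $221=13\cdot17$ is a perfect square.

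For the covering statement, I will reduce to prime powers. We may assume $q>0$ and factor $q=\prod p^{e}$. By the Chinese Remainder Theorem it suffices, for each prime power $p^e$, to find $c_p\in\Z$ with $p^e\mid\Pi(c_p)$. We then choose $c\equiv c_p\pmod{p^e}$ for all $p\mid q$, and $\Pi(c)\equiv\Pi(c_p)\equiv0\pmod{p^e}$ for each $p$.

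For odd $p\notin\{13,17\}$, I use multiplicativity of the Legendre symbol: $\left(\frac{221}{p}\right)=\left(\frac{13}{p}\right)\left(\frac{17}{p}\right)$. So at least one of $13,17,221$ is a nonzero quadratic residue mod $p$. Hensel's lemma then lifts a root of $X^2-m$ mod $p$ to one mod $p^e$, since the derivative $2X$ is a unit there.

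For $p=13$, I note that $17$ is a square mod $13$ ($17\equiv4$), and it is a unit, so $X^2-17$ has a root mod $13^e$ by Hensel. For $p=17$, I note that $13$ is a square mod $17$ ($13\equiv 8^2=64$), which lifts the same way.

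For $p=2$, I use that $17\equiv1\pmod8$, so $17$ is a $2$-adic square, i.e.\ $X^2\equiv17\pmod{2^e}$ is solvable for every $e$. This is the standard fact that odd $m\equiv1\pmod 8$ are squares modulo all powers of $2$, obtained by the usual Hensel-type lifting $x\mapsto x+2^{e-1}$.

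The only mildly delicate step is $p=2$, where ordinary Hensel fails because the derivative is even. I will handle it by the classical lifting argument: if $x^2\equiv17\pmod{2^e}$ with $e\ge3$, then either $x$ or $x+2^{e-1}$ satisfies the congruence modulo $2^{e+1}$. Combining all the local roots by CRT completes the proof.
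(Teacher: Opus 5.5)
Your proposal is correct and follows essentially the same route as the paper: the non-squareness of $13$, $17$, $221$, and the Legendre-symbol trichotomy for odd $p\notin\{13,17\}$. It likewise uses the explicit residues $17\equiv 2^2\pmod{13}$ and $13\equiv 8^2\pmod{17}$ with Hensel lifting, the $x$ versus $x+2^{e-1}$ lifting for $X^2\equiv17\pmod{2^e}$ (with base case $x=1$ since $17\equiv1\pmod 8$), and CRT to assemble a root modulo $\abs q$.
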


\begin{proof}
The numbers $13$, $17$ and $221$ are not integral squares, so $\Pi$ has no
integral zero.

Let $p$ be an odd prime.  If $p\notin\{13,17\}$, then either $13$ or $17$
is a quadratic residue modulo $p$, or both are nonresidues; in the latter
case $221=13\cdot17$ is a residue.  Thus one factor of $\Pi$ has a root
modulo $p$.  For $p=13$, use $17\equiv2^2\pmod{13}$; for $p=17$, use
$13\equiv8^2\pmod{17}$.  These roots are nonzero and hence simple, so they
lift to roots modulo every power $p^e$.

For powers of $2$, the congruence $x^2\equiv17\pmod{2^e}$ is solvable for
all $e\geq1$.  It is immediate for $e\leq3$.  If $x^2\equiv17\pmod{2^e}$
with $e\geq3$ and $x$ odd, one of $x$ and $x+2^{e-1}$ gives a solution
modulo $2^{e+1}$.  The Chinese remainder theorem now gives a root modulo
$\abs q$.
\end{proof}

For integers $S,T,Y$, put
\begin{equation}\label{eq:R-def}
 \mathcal R(S,T,Y)
 =\prod_{\alpha\in\{13,17,221\}}
 \bigl((T+SY)^2-\alpha S^2\bigr).
\end{equation}

\begin{lemma}\label{lem:merger}
Let $S,T,\xi\in\Z$ with $S\neq0$.  Then
\begin{equation}\label{eq:merger}
 S\mid T\ \land\ \xi\neq0
 \quad\Longleftrightarrow\quad
 \exists Y\,[\xi S^6\mid\mathcal R(S,T,Y)].
\end{equation}
\end{lemma}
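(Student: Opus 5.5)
Both directions come from the observation that $\mathcal R(S,T,Y)=S^6\,\Pi\bigl(Y+T/S\bigr)$ whenever $S\mid T$. In general $\mathcal R$ is homogeneous of degree $6$ in the pair $(S,\,T+SY)$, and each factor has the form $(T+SY)^2-\alpha S^2$.

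For the forward direction, assume $S\mid T$ and $\xi\neq0$, and write $T=Sm$. Then
\[
\mathcal R(S,T,Y)=S^6\prod_{\alpha}\bigl((m+Y)^2-\alpha\bigr)=S^6\,\Pi(m+Y).
\]
By Lemma~\ref{lem:universal-root} with $q=\xi$, there is $c\in\Z$ with $\xi\mid\Pi(c)$. Taking $Y=c-m$ gives $\xi S^6\mid S^6\Pi(c)=\mathcal R(S,T,Y)$.

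For the reverse direction, suppose $\xi S^6\mid\mathcal R(S,T,Y)$ for some $Y$.

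\emph{Step 1: $\xi\neq0$.} If $\xi=0$, the divisibility forces $\mathcal R(S,T,Y)=0$. Then $(T+SY)^2=\alpha S^2$ for some $\alpha\in\{13,17,221\}$. Since $S\neq0$, this makes $\alpha$ the square of the rational number $(T+SY)/S$, hence an integer square, which contradicts Lemma~\ref{lem:universal-root}. So $\xi\neq0$.

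\emph{Step 2: $S\mid T$.} Put $W=T+SY$; it suffices to show $S\mid W$. We have $S^6\mid\prod_\alpha(W^2-\alpha S^2)$. Fix a prime $p\mid S$ and set $e=\nu_p(S)\ge1$. Suppose, for contradiction, that $\nu_p(W)=w<e$.
\begin{itemize}
\item Each factor satisfies $\nu_p(W^2-\alpha S^2)\ge 2w$ and $\nu_p(\alpha S^2)\ge 2e>2w$.
\item Hence $\nu_p(W^2-\alpha S^2)=2w$ exactly, because $\nu_p(W^2)=2w$ is strictly smaller than $\nu_p(\alpha S^2)$.
\item The product therefore has $p$-adic valuation exactly $6w<6e=\nu_p(S^6)$, contradicting $S^6\mid\mathcal R$.
\end{itemize}
Thus $\nu_p(W)\ge\nu_p(S)$ for every prime $p\mid S$. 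The case $W=0$ is trivially fine, so $S\mid W$, and hence $S\mid T$.

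The step needing the most care is Step 2. The strict inequality $2e>2w$ is what makes the valuation of each factor exact. This works for every prime, including $p=2$ and $p\in\{13,17\}$, because the bound $\nu_p(\alpha S^2)\ge 2e$ does not depend on $\alpha$ being a $p$-adic unit.
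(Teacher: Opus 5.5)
Your proposal is correct and follows essentially the same route as the paper. The forward direction uses $\mathcal R(S,T,Y)=S^6\Pi(q+Y)$ together with Lemma~\ref{lem:universal-root}, and the reverse direction rules out $\xi=0$ by irrationality and then compares $p$-adic valuations to force $S\mid T$. The only cosmetic difference is that you compare $\nu_p(T+SY)$ with $\nu_p(S)$ directly, whereas the paper starts from $\nu_p(T)<\nu_p(S)$ and deduces $\nu_p(T+SY)=\nu_p(T)$.
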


\begin{proof}
Suppose first that $T=Sq$ and $\xi\neq0$.  By
Lemma \ref{lem:universal-root}, choose $c$ such that $\xi\mid\Pi(c)$, and take
$Y\equiv c-q\pmod\xi$.  Then
\[
 \mathcal R(S,T,Y)=S^6\Pi(q+Y),
\]
which proves the required divisibility.

Conversely, suppose that $\xi S^6\mid\mathcal R(S,T,Y)$.  If $\xi=0$,
then $\mathcal R(S,T,Y)=0$, so
\[
 \left(\frac{T+SY}{S}\right)^2\in\{13,17,221\},
\]
which is impossible in $\mathbb Q$.  Thus $\xi\neq0$.

If $S\nmid T$, choose a prime $p$ with
\[
 \alpha=\nu_p(S)>\beta=\nu_p(T).
\]
Then $\nu_p(T+SY)=\beta$.  For each
$\gamma\in\{13,17,221\}$,
\[
 \nu_p\bigl((T+SY)^2-\gamma S^2\bigr)=2\beta,
\]
because $2\beta<2\alpha+\nu_p(\gamma)$.  Hence
\[
 \nu_p(\mathcal R(S,T,Y))=6\beta<6\alpha,
\]
contradicting $S^6\mid\mathcal R(S,T,Y)$.  Therefore $S\mid T$.
\end{proof}

\section{The modified binomial-coefficient bridge}\label{sec:bridge}
\setcounter{equation}{0}

Fix a recursively enumerable set $\Acal\subseteq\N$ and the source
polynomials of Proposition \ref{prop:source-coding}.  For the rest of the paper, all
letters in this section denote the following explicit polynomial expressions:
\begin{gather}
 L=lY,\ U=2LX,\ V=4gwY,\ W=bw,\label{eq:bridge-1}\\
 A=U(V+1),\ B=2X+1,\  C=B+(A-2)h,\label{eq:bridge-2}\\
 K=X+1+k(U^2V-2),\label{eq:bridge-3}\\
 S=2A-5,\ 
 T=3WC-2(W^2-1).\label{eq:bridge-4}
\end{gather}
Notice that $A$ is even, $B$ is positive and odd whenever the source size
conditions hold, and $S$ is always a nonzero odd integer.

Define
\begin{equation}\label{eq:R2}
 R_2=(U^4V^2-4)K^2+4.
\end{equation}
To avoid the two different uses of the letter $C$ in Sun's paper, we reserve
$\Cscr$ for the coding bound from Proposition \ref{prop:source-coding} and $C$ for the
Lucas candidate in \eqref{eq:bridge-2}.  Put
\begin{equation}\label{eq:Delta}
 \Delta
 =8\Cscr^3gK^2
 -g^2\left(32(C-KL)^2\Cscr^3+g^2K^2\right)
\end{equation}
and
\begin{equation}\label{eq:O0}
 O_0=f^2l^2\Delta.
\end{equation}

The following proposition isolates exactly what is retained from Sun's
positive construction.

\begin{proposition}\label{prop:positive-skeleton}
If $a\in\Acal$, then there are integers $f,g,h,k,l,w$ such that
\begin{enumerate}[label=\textup{(\roman*)}]
\item $b$ is a square and a power of $2$, and
$b\leq g<\Cscr$;
\item $Y\mid\binom{2X}{X}$;
\item $C=u_B(A)$;
\item $R_2$ is a square and $S\mid T$;
\item
\begin{equation}\label{eq:strong-approx}
 16g^2(C-KL)^2<K^2;
\end{equation}
\item $O_0>0$.
\end{enumerate}
\end{proposition}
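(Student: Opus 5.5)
Take $a\in\Acal$ and apply Proposition~\ref{prop:source-coding}(i). This gives witnesses $f>0$ and $g$ with $b\in\Sq(\Z)$, $b\in2\uparrow$, $b\leq g<\Cscr$ and $Y\mid\binom{2X}{X}$. We choose them large enough that the estimates \eqref{eq:source-size} hold. Items (i) and (ii) are then immediate. The remaining work is to choose $l,w,h,k$ in that order, following the positive construction in the proofs of \cite[Theorems~4.12 and~4.13]{Sun21}. The key point is that the quantities \eqref{eq:bridge-1}--\eqref{eq:bridge-4} are literally Sun's, with only the certificate for $C=u_B(A)$ changed.

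First fix $l\geq1$ and $w\geq1$ as in Sun's proof. Then $U=2lXY>0$, $V=4gwY>0$ and $A=U(V+1)$ are even and large. Since $B=2X+1$, the size condition $1<B<A/2-1$ holds trivially. Because $C=u_B(A)\equiv u_B(2)=B\pmod{A-2}$, there is an integer $h$ with $C=B+(A-2)h$, and this choice of $h$ gives (iii).

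For (iv), take $K=u_{X+1}(U^2V)$, or whatever index Sun uses. Its index is $\equiv X+1\pmod{U^2V-2}$, because $u_m(2)=m$. So $K=X+1+k(U^2V-2)$ for some integer $k$, and Lemma~\ref{lem:pell-lucas} with parameter $U^2V$ makes $R_2$ a square.

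The divisibility $S\mid T$, i.e.\ $2A-5\mid 3WC-2(W^2-1)$, is where $w$ (through $W=bw$) must be chosen with care. Modulo $S$ we have $A\equiv5/2$, so $C=u_B(A)$ is congruent to a fixed quantity determined by the Lucas sequence at parameter $5/2$, namely $(2^{B}-2^{-B})/(3/2)$. With $W\equiv 2^{B}$ this gives $3WC\equiv 2(W^2-1)$. Thus we need $bw\equiv 2^{B}\pmod{2A-5}$. This is solvable since $b$ is a power of $2$ and $S$ is odd.

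There is a circularity here, because $A$ depends on $w$. I expect to resolve it as Sun does, by writing the condition as a congruence in which the dependence of $A$ on $w$ enters only through $V$. If that fails, an alternative is to pick $w$ among values making $W$ a suitable power of $2$ modulo $S$, using periodicity. \textbf{This is the step I expect to be the main obstacle}, and I will check it against the precise form in \cite[Theorem~4.13]{Sun21}.

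Finally, (v) and (vi) are the approximation estimates. By \eqref{eq:matrix}, $u_{B}(A)/u_{X+1}(U^2V)$ is very close to $L=lY$ when $U,V$ are large relative to $X$ and $\binom{2X}{X}$. This is Sun's binomial approximation $u_{2X+1}(A)\approx \binom{2X}{X}$-type expansion, and the choice $L=lY$ uses (ii). It gives $|C-KL|<K/(4g)$, which is (v).

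For (vi), note that $f^2l^2>0$. Then (v) implies $32(C-KL)^2\Cscr^3 g^2<2\Cscr^3K^2$. Since $g<\Cscr$, we also have $g^4K^2<\Cscr^3 gK^2$. Together these give $\Delta\geq 8\Cscr^3gK^2-3\Cscr^3gK^2>0$, provided we also use $g\geq b\geq1$. So (vi) reduces to (v) plus (i), and all the real content sits in the choices of $w$ and $l$ borrowed from Sun.
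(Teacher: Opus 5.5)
Your overall plan matches the paper's. Items (i)--(ii) come from Proposition~\ref{prop:source-coding}, and (iii)--(v) come from the positive construction in the proof of \cite[Theorem~4.12]{Sun21} with $p=P=2$, $Q=1$. Item (vi) follows from (v), $g\geq1$ and $g<\Cscr$ by an elementary inequality chain. Your chain for (vi) is correct and is a rearrangement of the paper's. Your explicit choices of $h$ and $k$, via $u_m(t)\equiv u_m(2)=m\pmod{t-2}$, are also fine; the paper leaves these to Sun.

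The gap is the step you flag as the main obstacle, and the circularity you see there is illusory. Do not try to solve $bw\equiv2^B\pmod{2A-5}$ with $A$ depending on $w$. Instead take $W=2^B$ exactly, i.e.
\[
 w=\frac{2^{B}}{b}=\frac{2^{2X+1}}{b}.
\]
This is a positive integer, because $b=2^\beta$ with $\beta<b<X<B$ (recall $X>3b$). This $w$ depends only on $a,f,g$. Your own computation modulo the odd number $S$ then applies directly:
\[
 3\cdot2^{B-1}u_B(A)\equiv2^{2B}-1\pmod{2A-5}.
\]
Hence $T=2\bigl(3\cdot2^{B-1}C-(2^{2B}-1)\bigr)\equiv0\pmod S$ for every $A$, so $S\mid T$ holds whatever $l$ is chosen afterwards. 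This is the intended choice. In Proposition~\ref{prop:reverse-bridge}, the conditions $S\mid T$ and $C=u_B(A)$ are exactly what force $W=2^B$ via \cite[Lemma~4.10]{Sun21}, so a periodicity argument aiming at some other $W$ is beside the point.

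Relatedly, the order of choices must be $w$ before $l$, not $l$ first as you wrote. The reason is that $l=L/Y$ with $L=\lfloor(V+1)^{2X}/V^X\rfloor$, which depends on $V=4gwY$. The divisibility $Y\mid L$, needed for $l$ to be an integer, is where (ii) enters.
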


\begin{proof}
Choose $f,g$ by Proposition \ref{prop:source-coding}.  In the proof of
\cite[Theorem~4.12]{Sun21}, specialize $p=P=2$ and $Q=1$.  The definitions
of $w,l,h,k$ and the estimates involving them are independent
of the original Lucas witnesses $x,y$ (the variable $k$ is written later in
Sun's proof in \cite{Sun21}, but is determined solely by the second Lucas term).  The same
proof gives $C=u_B(A)$, the square condition for $R_2$, the divisibility
$S\mid T$, and \eqref{eq:strong-approx}; see
\cite[equations (4.8)--(4.18)]{Sun21}.  We replace only the later invocation
of \cite[Lemma~4.4]{Sun21}.

It remains to verify $O_0>0$.  From \eqref{eq:strong-approx} and
$0<g<\Cscr$,
\[
 4(C-KL)^2+\frac{g^2K^2}{8\Cscr^3}
 <\frac{K^2}{4g^2}+\frac{K^2}{8g}
 \leq\frac{K^2}{g}.
\]
Multiplying the difference by $8\Cscr^3g^2$ gives precisely $\Delta>0$.
Since $f,l\neq0$, equation \eqref{eq:O0} yields $O_0>0$.
\end{proof}

We next prove the reverse bridge in the form needed later.

\begin{proposition}\label{prop:reverse-bridge}
Assume $a\in\N$ and that integers $f,g,h,k,l,w,\xi,y$ satisfy
\begin{equation}\label{eq:reverse-bridge-assumptions}
 b\in\Sq(\Z),\quad
 O_0>0,\quad
 \xi\neq0,\quad
 DFI^*\in\Sq(\Z),\quad
 R_2\in\Sq(\Z),\quad
 S\mid T.
\end{equation}
Then $a\in\Acal$.
\end{proposition}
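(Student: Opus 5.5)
We reduce to Proposition \ref{prop:source-coding}(ii), retracing the reverse direction of \cite[Theorem~4.13]{Sun21}; the only change is that Theorem \ref{thm:modified-reverse} replaces the old two-witness Lucas lemma.

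\textbf{Step 1: extracting the size conditions from $O_0>0$.} Since $O_0=f^2l^2\Delta>0$, we get $f\neq0$, $l\neq0$ and $\Delta>0$. Rewrite $\Delta>0$ as
\[
 g^2\bigl(32(C-KL)^2\Cscr^3+g^2K^2\bigr)<8\Cscr^3gK^2 .
\]
The left side is $\geq0$, so $gK^2>0$; hence $g>0$ and $K\neq0$. Dropping the first summand on the left gives $g^3<8\Cscr^3$, that is, $0<g<2\Cscr$. Dropping the second summand gives $4g(C-KL)^2<K^2$, a Diophantine approximation of $C$ by $KL$. Together with $b\in\Sq(\Z)$ and $f\neq0$, Proposition \ref{prop:source-coding}(iii) now yields $X>3b$ and $Y>\max\{b,2^8\}$. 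Note that $b=1+3(2a+1)f$ is a nonzero square, so $b\geq1$.

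\textbf{Step 2: identifying $C$ as a Lucas number.} We want to apply Theorem \ref{thm:modified-reverse} to $A,B,C$ from \eqref{eq:bridge-2}.
\begin{itemize}
\item $A=U(V+1)$ is even, because $U=2LX$.
\item $B=2X+1$ is odd and $B>1$.
\item $A-2\mid C-B$ holds by construction.
\item It remains to show $B<\abs A/2-1$, which needs $L=lY$, $w$ and $V$ nonzero.
\end{itemize}
For the last point we use $R_2\in\Sq(\Z)$. By Lemma \ref{lem:pell-lucas}, $K=u_m(U^2V)$ for some $m$. Since $K\equiv X+1\pmod{U^2V-2}$, the degenerate case $V=0$ should be excluded as follows: then $R_2=4-4K^2$ forces $K=0$ or $\abs K=1$, and both contradict $K\neq0$ together with the approximation from Step 1. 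I expect this degenerate-case bookkeeping (also the cases $w=0$ or $l=0$, where $A$ could be small) to need care. Granting $\abs V\geq 4gY\geq1$ and $\abs U\geq 2X$, we get $\abs A\geq 2X(\abs V-1)$, which is large compared with $4X+4$ because $\abs V\ge 4Y\geq 2^{10}$. So \eqref{eq:reverse-range} holds, and with $\xi\neq0$ and $DFI^*\in\Sq(\Z)$, Theorem \ref{thm:modified-reverse} gives $C=u_B(A)=u_{2X+1}(A)$.

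\textbf{Step 3: recovering the source conditions.} We now follow Sun's original reverse argument in \cite[Theorem~4.13]{Sun21} (specialized to $p=2$). It combines three ingredients:
\begin{itemize}
\item $C=u_{2X+1}(U(V+1))$;
\item $K=u_m(U^2V)$ with $K\equiv X+1\pmod{U^2V-2}$, which pins down the index $m$;
\item the approximation $4g(C-KL)^2<K^2$.
\end{itemize}
From these Sun deduces that the rounded quotient $C/K$ encodes $L\cdot(\text{a ratio approximating }\binom{2X}{X}$-type quantities$)$, and hence that $Y\mid\binom{2X}{X}$. Separately, $S\mid T$ with $S=2A-5$ and $T=3WC-2(W^2-1)$, combined with $C=u_B(A)$, yields the condition $b\in2\uparrow$. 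This is the main obstacle: we must check that Sun's derivation used only these relations and bounds (the $g<2\Cscr$ range rather than $g<\Cscr$, and the weaker constant $4g$ instead of $16g^2$). We will verify that the estimates in \cite[(4.8)--(4.18)]{Sun21} survive with the weakened constants, using the slack provided by $Y>2^8$ and $X>3b$.

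\textbf{Conclusion.} Once we have $f\neq0$, $0\leq g<2\Cscr$, $b\in\Sq(\Z)$, $b\in2\uparrow$ and $Y\mid\binom{2X}{X}$, Proposition \ref{prop:source-coding}(ii) gives $a\in\Acal$.
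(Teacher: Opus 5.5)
Your overall route is the paper's. You extract $f,l\neq0$, $g>0$, $K\neq0$, $0<g<2\Cscr$ and the approximation from $O_0>0$, and get the source size conditions from Proposition~\ref{prop:source-coding}(iii). You then apply Theorem~\ref{thm:modified-reverse} to obtain $C=u_B(A)$, and defer to the rest of Sun's reverse argument for $K=u_{X+1}(U^2V)$, $W=2^B$ and $Y\mid\binom{2X}{X}$.

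The step that fails as written is your exclusion of the degenerate case $w=0$ (equivalently $V=0$, or $W=0$, since $b\neq0$ and $g,Y>0$). From $R_2=4-4K^2\in\Sq(\Z)$ you correctly get $K\in\{0,\pm1\}$. But for $\abs{K}=1$ the approximation $4g(C-KL)^2<K^2=1$ only yields $C=KL=\pm L$, which contradicts nothing you have listed. So $R_2\in\Sq(\Z)$ and the approximation are by themselves consistent with $V=0$.

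The missing ingredient is the hypothesis $S\mid T$. This is what the paper uses, via the first paragraph of Sun's proof of Theorem~4.13, to get $W\neq0$:
\begin{itemize}
\item if $W=0$ then $T=2$, so the odd number $S=2A-5$ divides $2$, forcing $A=2$ since $A$ is even;
\item but $4\mid V$ gives $V+1\neq0$, hence $\abs{A}\geq\abs{U}=2\abs{l}\,YX\geq 2YX>2$, a contradiction.
\end{itemize}
The same bound $\abs{A}\geq2YX$, with $Y>2^8$ and $X>3$, directly gives $B=2X+1<\abs{A}/2-1$. So the range condition in \eqref{eq:reverse-range} never needed $V\neq0$. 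What does need $w\neq0$ is the later part of Sun's argument: pinning down $K=u_{X+1}(U^2V)$ is degenerate when $U^2V=0$. With this fix inserted, your proposal coincides with the paper's proof.

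Your remaining worry about weakened constants is not an extra obstacle. Your bound $4g(C-KL)^2<K^2$ implies the inequality $4(C-KL)^2<K^2$ that the paper feeds into Sun's reverse-direction estimates (4.23)--(4.32), not the positive-direction (4.8)--(4.18). Also, Proposition~\ref{prop:source-coding}(ii) already allows $0\leq g<2\Cscr$.
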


\begin{proof}
Because $b=1+3(2a+1)f$, the square $b$ cannot be zero.  Hence $b>0$ and
$\Cscr>0$.  From $O_0>0$ we obtain $f,l\neq0$ and $\Delta>0$.  If $g\leq0$,
then every term in \eqref{eq:Delta} is nonpositive, a contradiction; hence
$g>0$.  If $K=0$, the same formula gives $\Delta\leq0$, so $K\neq0$.
We may therefore divide by the positive quantity $8\Cscr^3g^2$ and write
\[
 \frac{K^2}{g}
 >4(C-KL)^2+\frac{g^2K^2}{8\Cscr^3}.
\]
It follows that
\begin{equation}\label{eq:reverse-bounds}
 0<g<2\Cscr,
 \qquad
 4(C-KL)^2<K^2.
\end{equation}
By Proposition \ref{prop:source-coding}, the source size conditions
\eqref{eq:source-size} hold.

We now follow the proof of \cite[Theorem~4.13]{Sun21}.  Its first paragraph,
using only $S\mid T$, $l\neq0$ and the source size conditions, proves
$W\neq0$ and
\[
 1<B<\frac{\abs A}{2}-1.
\]
The congruence $A-2\mid C-B$ is built into \eqref{eq:bridge-2}.
Thus Theorem \ref{thm:modified-reverse}, applied to the third condition in
\eqref{eq:reverse-bridge-assumptions}, gives
\begin{equation}\label{eq:C-uB}
 C=u_B(A).
\end{equation}

From this point onward, the proof of \cite[Theorem~4.13]{Sun21} does not use
the formulas for $D,E,F,G,H,I$.  The square condition for $R_2$, the
congruence defining $K$, and the second inequality in
\eqref{eq:reverse-bounds} force
\[
 K=u_{X+1}(U^2V);
\]
this is the argument in \cite[equations (4.23)--(4.25)]{Sun21}.  The
exponential congruence $S\mid T$, together with \eqref{eq:C-uB}, then gives
\[
 W=2^B
\]
by \cite[Lemma~4.10]{Sun21}.  Finally, the approximation estimates in
\cite[equations (4.26)--(4.32)]{Sun21} show that
\[
 L=\left\lfloor\frac{(V+1)^{2X}}{V^X}\right\rfloor,
 \qquad
 Y\mid\binom{2X}{X}.
\]
Since $W=bw=2^B$ and $b>0$, $b$ is a power of $2$.  We have therefore
verified all hypotheses of the reverse implication in
Proposition \ref{prop:source-coding}, and hence $a\in\Acal$.
\end{proof}

\section{Proof of Theorem \ref{Th1.1}}\label{sec:final}
\setcounter{equation}{0}

We now share the positivity witness with the first Lucas witness.  Define
\begin{equation}\label{eq:xi-zx}
 \xi=zx.
\end{equation}
All expressions $D,E,F,G^*,\Omega,H^*,I^*$ are henceforth obtained by
substituting \eqref{eq:xi-zx} into \eqref{eq:D}--\eqref{eq:Istar}.
Set
\begin{equation}\label{eq:Sigma-Theta}
 \Sigma=\xi S^6,
 \qquad
 \Theta=\mathcal R(S,T,y).
\end{equation}
Finally, let
\begin{equation}\label{eq:final-Q}
 Q_{\Acal}^{(10)}
 =H_4\Bigl(b,
 (3O_0-4)z^2+1,
 DFI^*,
 R_2,\Sigma,
 \Theta,
 m\Bigr).
\end{equation}
Every displayed auxiliary quantity is an integer polynomial in
\[
 a,f,g,h,k,l,w,x,y,z,m.
\]
In particular,
\[
 Q_{\Acal}^{(10)}
 \in\Z[a,f,g,h,k,l,w,x,y,z,m].
\]

\begin{proof}[Proof of Theorem \ref{Th1.1}]
We prove both implications.

\smallskip
\noindent\emph{Positive direction.}
Assume $a\in\Acal$.  Choose $f,g,h,k,l,w$ as in
Proposition \ref{prop:positive-skeleton}.  Since $O_0>0$, Lemma \ref{lem:positivity}
provides a positive integer $z$ such that
\[
 (3O_0-4)z^2+1\in\Sq(\Z).
\]
Apply Theorem \ref{thm:full-interpolation} with $N=z$.  It yields a positive
$\xi$ divisible by $z$.  Put
\[
 x=\frac{\xi}{z}\in\Z.
\]
Since $S\mid T$, write $q=T/S$.  By Lemma \ref{lem:universal-root}, choose
$c\in\Z$ with $\xi\mid\Pi(c)$.  Invoke the residue-class part of
Theorem \ref{thm:full-interpolation} with the class $c-q\pmod\xi$.  We obtain a
single integer $y$ for which
\[
 DFI^*\in\Sq(\Z)\ \ \t{and}\ \ 
 y\equiv c-q\pmod\xi.
\]
Therefore
\[
 \xi\mid\Pi(q+y)
\]
and, by \eqref{eq:R-def},
\[
 \Sigma=\xi S^6\mid\Theta=\mathcal R(S,T,y).
\]
The four square conditions appearing in \eqref{eq:final-Q} and its one
divisibility condition are all satisfied.  By
Lemma \ref{lem:relation-combining}, there is $m\in\Z$ such that
$Q_{\Acal}^{(10)}=0$.

\smallskip
\noindent\emph{Reverse direction.}
Assume
\[
 Q_{\Acal}^{(10)}(a,f,g,h,k,l,w,x,y,z,m)=0.
\]
The integer $S=2A-5$ is nonzero.  Also $\Theta\neq0$: otherwise one of
$13,17,221$ would equal the square of the rational number $(T+Sy)/S$.
If $\Sigma=0$, then \eqref{eq:H-zero} with $k=4$ gives
\[
 Q_{\Acal}^{(10)}=\Theta^{16}\neq0,
\]
a contradiction.  Hence
\[
 \Sigma=\xi S^6\neq0.
\]
In particular,
\begin{equation}\label{eq:nonzero-chain}
 \xi=zx\neq0,
 \qquad
 z\neq0,
 \qquad
 x\neq0.
\end{equation}
We may now apply Lemma \ref{lem:relation-combining}.  It gives the four square
conditions in \eqref{eq:final-Q} and
\[
 \xi S^6\mid\mathcal R(S,T,y).
\]
By Lemma \ref{lem:merger}, $S\mid T$ and $\xi\neq0$.  The positivity square,
together with $z\neq0$, gives $O_0>0$ by Lemma \ref{lem:positivity}.  Therefore
all assumptions of Proposition \ref{prop:reverse-bridge} hold, and $a\in\Acal$.

This proves the asserted equivalence.  The construction is effective from a
fixed Diophantine representation of $\Acal$, in the spirit of Sun \cite{Sun21}.
\end{proof}



\begin{thebibliography}{99}

\bibitem {C80} N. Cutland,  Computability, Cambridge Univ. Press, Cambrigde, 1980.


\bibitem{DPR}
M.~Davis, H.~Putnam and J.~Robinson,
\emph{The decision problem for exponential Diophantine equations},
Ann.\ of Math. (2) \textbf{74} (1961), 425--436.

\bibitem {J82} J. P. Jones,  {\it Universal Diophantine equation},
 J. Symbolic Logic 47 (1982), 549--571.


\bibitem{M70}
Y.~Matiyasevich,
\emph{Enumerable sets are Diophantine},
Dokl. Akad. Nauk SSSR \textbf{191} (1970), 279--282;
English translation, Soviet Math. Dokl. \textbf{11} (1970), 354--357.

\bibitem{MR75}
Y.~Matiyasevich and J.~Robinson,
\emph{Reduction of an arbitrary Diophantine equation to one in 13 unknowns},
Acta Arith. \textbf{27} (1975), 521--553.

\bibitem {M77}  Y. Matiyasevich, Some purely mathematical results inspired by mathematical logic,
in: Logic, Foundations of Mathematics and Computability Theory
(London, Ont., 1975). Reidel, Dordrecht, 1977, Part I, 121--127.

\bibitem {MPR} A. B. Matos, L. Paolini, L. Roversi,
{\it The fixed point problem of a simple reversible language},
Theoret. Comput. Sci. {\bf 813} (2020), 143--154.


\bibitem {N96} M. B. Nathanson, Additive Number Theory: The
Classical Bases, Grad. Texts in Math., vol. 164,
New York: Springer, 1996.

 \bibitem{RGK}   J. Richter-Gebert, U. H. Kortenkamp,
 {\it  Complexity issues in dynamic geometry},
Foundations of Computational Mathematics (Hong Kong, 2000),
World Sci., New Jersey, 2002, pp.\, 355--404.


\bibitem{Sanna16}
C.~Sanna,
\emph{The $p$-adic valuation of Lucas sequences},
Fibonacci Quart. \textbf{54} (2016), no.~2, 118--124.

\bibitem{Sun92}
Z.-W.~Sun,
\emph{Reduction of unknowns in Diophantine representations},
Sci. China Ser. A \textbf{35} (1992), no.~3, 257--269.

\bibitem {S92b} Z.-W. Sun, {\it  A new relation-combining theorem and its application},
 Z. Math. Logik Grundlag. Math. {\bf 38} (1992), 209--212.


\bibitem{Sun21}
Z.-W.~Sun,
\emph{Further results on Hilbert's tenth problem},
Sci. China Math. \textbf{64} (2021), no.~2, 281--306;
arXiv:1704.03504v7.

\bibitem{S24}
Z.-W. Sun,
Fibonacci Numbers and Hilbert's Tenth Problem,
Harbin Institute of Technology Press, Harbin, 2024.

\bibitem{Sun26}
Z.-W.~Sun,
\emph{On Diophantine equations over the integer rings of quadratic fields},
arXiv:2608.03992, 2026.

\bibitem {T85} S. P. Tung,  {\it On weak number theories},
 Japan. J. Math. (N.S.) {\bf 11} (1985), 203--232.







\end{thebibliography}
 \end{document}